\theoremstyle{plain}
\newtheorem{theorem}{Theorem}
\newtheorem{lemma}[theorem]{Lemma}
\theoremstyle{definition}
\date{\today}
\title[Enumeration of circular permutations]{Enumerating circular permutations avoiding the vincular pattern $\overline{23}$41}
\begin{document}\author[T. Mansour]{Toufik Mansour}
\address{Department of Mathematics, University of Haifa,
3498838 Haifa, Israel}
\email{tmansour@univ.haifa.ac.il}
\author[M. Shattuck]{Mark Shattuck}
\address{Department of Mathematics, University of Tennessee,
37996 Knoxville, TN}
\email{shattuck@math.utk.edu}

\begin{abstract} In this paper, we find an explicit formula for the generating function that counts the circular permutations of length $n$ avoiding the pattern $\overline{23}41$ whose enumeration was raised as an open problem by Rupert Li. This then completes in all cases the enumeration of circular permutations that avoid a single vincular pattern of length four containing one vinculum.  To establish our results, we introduce three auxiliary arrays which when taken together refine the cardinality of the class of permutations in question.  Rewriting the recurrences of these arrays in terms of generating functions leads to functional equations which are solved by various means including the kernel method and iteration.
\end{abstract}
\subjclass[2010]{05A15, 05A05}
\keywords{pattern avoidance, combinatorial statistic, vincular pattern, circular permutation}

\maketitle

\section{Introduction}

The problem of pattern avoidance has been an ongoing object of research in combinatorics over the past few decades.  We refer the reader to the text \cite{Kit} and references contained therein.  Considered originally on permutations \cite{Knuth,SS}, the problem has been studied on several other finite discrete structures including compositions, set partitions and words (see, e.g., the texts \cite{HeuM,Mans}).  Further, various extensions of the basic avoidance problem have been obtained by stipulating that an occurrence of a pattern must meet certain requirements.  In this paper, we consider the avoidance of a particular pattern of length four by circular permutations wherein the first two entries in an occurrence of the pattern satisfy an adjacency condition.

A \emph{linear} permutation of $[n]=\{1,\ldots,n\}$ is any sequential arrangement of the elements of $[n]$ written in a row.  In contrast, in a \emph{circular} permutation, the elements of $[n]$ are written in some order about the circumference of a circle instead of along a line.  Thus, one may regard the position of $1$ as being fixed and hence there are $(n-1)!$ circular permutations of $[n]$.  Circular permutations are encountered frequently outside of the realm of pattern avoidance; for an example of a recent application, see \cite{KW} where they are used in the study of the inhomogeneous totally asymmetric simple exclusion process on a ring.

Given linear permutations $\pi=\pi_1\cdots\pi_n$ and $\tau=\tau_1\cdots\tau_m$ where $1 \leq m \leq n$, then $\pi$ is said to \emph{contain} the pattern $\tau$ if there exist
$1 \leq i_1<\cdots<i_m \leq n$ such that $\pi_{i_1}\cdots \pi_{i_m}$ is order isomorphic to $\tau$.  Otherwise, $\pi$ is said to \emph{avoid} $\tau$.  This definition is extended to circular permutations by allowing for subsequences starting near the end of $\pi$ to wrap back around to the beginning.  More precisely, given $\pi=\pi_1\pi_2\cdots\pi_n$, let $$S=\{\pi_1\pi_2\cdots\pi_n,\pi_n\pi_1\cdots\pi_{n-1},\pi_{n-1}\pi_n\pi_1\cdots\pi_{n-2},\ldots,\pi_2\pi_3\cdots \pi_n\pi_1\}$$ consist of the permutations obtained from $\pi$ by applying repeatedly the cyclic shift operation where the last letter is moved to the front.  Then the circular permutation $\pi$ contains the pattern $\tau$ if and only if some member of $S$ contains $\pi$ per the definition given above in the linear case. If this fails to occur, then $\pi$ avoids $\tau$ in the circular sense.  Geometrically, an occurrence of $\tau$ in a circular permutation $\pi$ happens when one starts at any fixed position $x$ along the circle and, proceeding clockwise from $x$, encounters a subsequence of $\pi$ isomorphic to $\tau$ prior to returning to $x$.  Note that all patterns that can be obtained from $\tau$ by cyclic rotation are equivalent to $\tau$ in circular pattern avoidance.  This is in addition to the usual symmetries such as reversal and complementation which also apply to linear patterns.

The study of pattern avoidance in circular permutations was initiated by Callan \cite{Cal} who considered the case of a single classical pattern of length four for which there are three distinct Wllf equivalence classes (with representative members $1234$, $1324$ and $1342$).  Simple explicit formulas were found in \cite{Cal} for the number of circular permutations of $[n]$ avoiding each of the three patterns.  These enumerative results were later extended to all sets of patterns of length four by Domagalski et al.~\cite{DLM}.  See also the related paper by Vella \cite{Ve}.  Gray et al.~extended Callan's work by considering cyclic packing of patterns \cite{GLW1}
and patterns in colored circular permutations \cite{GLW2}. Further related generating function formulas which count circular permutations according to the number of occurrences of a subword pattern starting with 1 have been found by Elizalde and Sagan \cite{ES} in terms of the comparable formulas from the linear case.

Babson and Steingr\`{\i}msson \cite{BS} introduced the notion of \emph{vincular} pattern avoidance where certain adjacent elements of the pattern $\tau$ are required to be adjacent in an occurrence of $\tau$ within $\pi$.  The elements that must be adjacent are overlined in $\tau$ and each such pair of elements is referred to as a \emph{vinculum}.  For example, $\pi=41523$ contains two subsequences that are isomorphic to $\tau=231$, namely $452$ and $453$, but only $452$ is an occurrence of $2\overline{31}$ as the $5$ and $2$ are adjacent.  Vincular patterns in which every possible pair of adjacent elements is overlined are known as \emph{subwords}, whereas those in which no pair is overlined correspond to the \emph{classical} patterns.  The notion of vincular pattern avoidance in circular permutations is defined in analogy to the classical case described above.

Note that there are $4!=24$ vincular patterns of length four containing one vinculum in the circular case, as it may be assumed that the first two letters in a pattern are those that are overlined.  It is seen that these $24$ patterns give rise to eight distinct equivalence classes of patterns based on the reversal and complementation operations.  The permutations avoiding the pattern in question in six of these classes are enumerated by Li \cite{Li}, and an additional case was found by Domagalski et al.~\cite{DLM}.  In three of these classes, it is shown that there are $C_{n-1}=\frac{1}{n}\binom{2n-2}{n-1}$ circular permutations of length $n$ that avoid the underlying pattern, while for two others, the permutations in question were shown to have cardinality given by $1+\sum_{i=0}^{n-2}i(i+1)^{n-i-2}$ for $n \geq 2$.  The enumeration of permutations in the remaining uncounted class corresponding to $\overline{23}41$ ($\equiv \overline{32}14$) was left as an open problem in \cite{Li}, which we address here.  Note that $\overline{23}41$ indeed corresponds to a trivial Wilf equivalence class and its enumerating sequence fails to occur in the OEIS \cite{Slo}.

In this paper, we find an explicit formula for the generating function (gf) that enumerates the circular permutations of length $n$ for $n \geq 1$ avoiding the vincular pattern $\overline{23}41$ (see Theorem \ref{mth1}  below).  In the next section, we find recurrences for three auxiliary arrays which when taken together refine the sequence of cardinalities that are sought.  In the third section, we rewrite these recurrences in terms of gf's which lead to functional equations satisfied by the gf's that can be solved explicitly to yield the desired formula.  We note that a more general gf result can be obtained wherein the class of permutations in question is enumerated according to the joint distribution of two parameters defined on the class.

\section{Recurrence formulas of arrays}

Let $\mathcal{A}_n$ denote the set of circular permutations of $[n]$ that avoid the vincular pattern $\overline{23}41$.  Consider fixing the position of the element $1$ within a circular permutation $\lambda$ and let $\lambda'$ denote the resulting linear permutation of length $n-1$ obtained by deleting $1$ from $\lambda$.  Then we have that $\lambda$ avoids $\overline{23}41$ if and only if $\lambda'$ avoids both $\overline{12}3$ and $41\overline{23}$.  To realize this, first let $\overline{xy}zw$ denote a potential occurrence of $\overline{23}41$ within $\lambda$.  Then consider whether or not $z$ occurs prior to the actual letter $1$ when starting from $y$ and proceeding clockwise.  If it does, then  $\overline{xy}z$ is an occurrence of $\overline{12}3$ in $\lambda'$.  If it does not, then $zw\overline{xy}$ must correspond to an occurrence of $41\overline{23}$ in $\lambda'$.  Conversely, if $\lambda'$ contains the patterns $\overline{12}3$ and $41\overline{23}$, then it is seen that $\lambda$ must contain $\overline{23}41$.

Let $\mathcal{L}_n$ denote the set of linear permutations of $[n]$ that avoid $\{\overline{12}3,41\overline{23}\}$ and we seek to enumerate the members of $\mathcal{L}_n$.  It will be convenient to hold out from consideration the permutation $(n-1)(n-2)\cdots 1n$. Denote by $\mathcal{L}_n^*$ the set $\mathcal{L}_n-\{(n-1)(n-2)\cdots 1n\}$ and we refine $\mathcal{L}_n^*$ as follows.  Let $\mathcal{B}_n$ for $n \geq 2$ be the subset of $\mathcal{L}_n^*$ consisting of those members in which $1$ occurs somewhere to the right of $n$ and let $\mathcal{C}_n$ for $n \geq 3$ be the subset of $\mathcal{L}_n^*$ in whose members $1$ occurs to the left of $n$ while $2$ occurs somewhere to its right.

Note that within a member of $\mathcal{L}_n$, all letters to the left of $n$ must occur in decreasing order so as to avoid $\overline{12}3$.  Suppose $n \geq 4$ and let $\rho \in \mathcal{L}_n^*-(\mathcal{B}_n\cup\mathcal{C}_n)$.  Then both $1$ and $2$ must occur to the left of $n$, implying $\rho$ can be decomposed as $\rho=\alpha\beta n \gamma$, where $\alpha$ is decreasing and possibly empty, $\beta$ consists of the elements of $[d]$ for some $d \geq 2$ in decreasing order with $d$ taken to be maximal and $\gamma$ is non-empty.  Note that $\rho \in \mathcal{L}_n^*$ implies $d \leq n-2$ and thus $d+1 \in \gamma$.  Observe further that the subsequence $d(d-1)\cdots 2$ within $\rho$ is superfluous concerning the avoidance of both $\overline{12}3$ and $41\overline{23}$, as it is decreasing and consists of an interval.  Indeed, only the $1$ would be needed in a potential occurrence of $41\overline{23}$ in which the role of the $1$ is played by a letter from $\beta$.  Further, no letter in $\beta$ can play the role of the $4$, $2$ or $3$ within an occurrence of the pattern $41\overline{23}$.

Thus, one may delete all elements of $[2,d]$ from $\rho$ (and subsequently subtract $d-1$ from each letter in $[d+1,n]$) to obtain a member of $\mathcal{C}_{n-d+1}$ for some $2 \leq d \leq n-2$.  Allowing $d$ to range over all possible values implies
\begin{equation}\label{andef0}
|\mathcal{L}_n^*|=|\mathcal{B}_n|+\sum_{d=1}^{n-2}|\mathcal{C}_{n-d+1}|, \qquad n \geq 2.
\end{equation}
In order to determine the cardinalities of $\mathcal{B}_n$ and $\mathcal{C}_n$, we refine these sets as follows.  Given $n \geq 2$ and $1 \leq i,j \leq n$ with $i \neq j$, let $\mathcal{B}_{n,i,j}$ and $\mathcal{C}_{n,i,j}$ denote the subsets of $\mathcal{B}_n$ and $\mathcal{C}_n$ consisting of those members that end in $i,j$.  For example, we have $\mathcal{B}_{5,3,2}=\{45132,51432,54132\}$ and $\mathcal{C}_{5,2,4}=\{15324,31524\}$. Let $b(n,i,j)=|\mathcal{B}_{n,i,j}|$ and $c(n,i,j)=|\mathcal{C}_{n,i,j}|$.  The initial values for $b(n,i,j)$ and $c(n,i,j)$ are as follows:  $b(2,2,1)=1$, with $b(2,1,2)=c(2,1,2)=c(2,2,1)=0$ for $n=2$; $b(3,1,3)=b(3,2,3)=b(3,3,2)=0$ and $b(3,1,2)=b(3,2,1)=b(3,3,1)=c(3,3,2)=1$, with $c(3,i,j)=0$ if $(i,j)\neq(3,2)$ for $n=3$.

Given $n \geq 2$ and $1 \leq j \leq n$, let $\mathcal{B}_{n,j}=\cup_{i=1,i\neq j}^n\mathcal{B}_{n,i,j}$ and $\mathcal{C}_{n,j}=\cup_{i=1,i\neq j}^n\mathcal{C}_{n,i,j}$, with $b(n,j)=|\mathcal{B}_{n,j}|$ and $c(n,j)=|\mathcal{C}_{n,j}|$. By the definitions, we have  $$b(n,j)=\sum_{i=1,i\neq j}^nb(n,i,j)$$ and
$$c(n,j)=\sum_{i=1,i\neq j}^nc(n,i,j).$$
The initial values for $b(n,j)$ and $c(n,j)$ are given by $b(2,1)=1$ and $b(2,2)=c(2,1)=c(2,2)=0$ for $n=2$, with $b(3,1)=2$, $b(3,2)=c(3,2)=1$ and $b(3,3)=c(3,1)=c(3,3)=0$ for $n=3$.  Note that $b(n,n)=c(n,n)=0$ for all $n \geq 2$ as the pertinent subsets of $\mathcal{B}_n$ and $\mathcal{C}_n$ are empty in both cases.

Let $a_n=|\mathcal{L}_n|$ for $n \geq 1$.  By \eqref{andef0} and taking into account the permutation $(n-1)(n-2)\cdots 1n$ which was excluded above, we have
\begin{equation}\label{andef}
a_n=1+\sum_{j=1}^{n}b(n,j)+\sum_{d=0}^{n-2}\sum_{i=1}^{n-d}c(n-d,i), \qquad n \geq 2,
\end{equation}
with $a_1=1$. Note that $|\mathcal{A}_n|=a_{n-1}$ for $n \geq 2$, with $a_1=1$, and hence we seek the generating function
$$A(x)=x+\sum_{n\geq 2}a_{n-1}x^n,$$
which counts all circular permutations of length $n$ that avoid $\overline{23}41$.

The arrays $b(n,i,j)$ and $c(n,i,j)$ may assume non-zero values only for $n \geq 2$ and $1\leq i,j \leq n$ with $i \neq j$ and are determined by the recurrences in the following lemmas.

\begin{lemma}\label{reclem1}
If $n \geq 3$, then
\begin{equation}\label{bneq1}
b(n,i,1)=b(n-1,i-1)+\sum_{d=2}^{i-1}c(n-i+d,d), \qquad 2 \leq i \leq n-1,
\end{equation}
\begin{equation}\label{bneq2}
b(n,i,j)=b(n-1,i-1), \qquad 2 \leq j<i \leq n-1,
\end{equation}
and
\begin{align}
b(n,1,j)&=2^{j-2}+\sum_{k=j+1}^{n-1}\sum_{d=2}^j\binom{j-2}{d-2}b(n-d,k-d)+\sum_{k=j+1}^{n-1}\sum_{d=2}^j\sum_{\ell=d}^{k-2}\binom{j-2}{d-2}c(n-\ell,k-\ell)\label{bneq3}
\end{align}
for $2 \leq j \leq n-1$, with $b(n,i,j)=0$ for $2 \leq i<j \leq n-1$ and $b(n,i,n)=0$, $b(n,n,j)=\delta_{j,1}$ if $n \geq 2$ and $i,j \in [n-1]$.
\end{lemma}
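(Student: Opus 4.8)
The plan is to prove each of the three displayed recurrences by means of an explicit deletion/insertion correspondence, conditioning throughout on the positions of the entries $1$ and $2$ relative to $n$ and exploiting two structural facts: every letter lying to the left of $n$ appears in decreasing order (forced by $\overline{12}3$-avoidance), and the smallest letter of any occurrence of $\overline{12}3$ or $41\overline{23}$ is never its final letter (the last letter of $\overline{12}3$ is its largest, and the last letter of $41\overline{23}$ is its second largest). The stated boundary values follow immediately from these observations: if $2\le i<j\le n-1$ then $n$, the entry $1$ (which sits to the right of $n$), and the terminal ascent $ij$ form an occurrence of $41\overline{23}$, so $b(n,i,j)=0$; a permutation ending in $n$ leaves no position for $1$ to its right, whence $b(n,i,n)=0$; and if $n$ is penultimate, the unique letter to its right must be $1$, forcing the permutation $(n-1)(n-2)\cdots 2\,n\,1$ and hence $b(n,n,j)=\delta_{j,1}$.

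For \eqref{bneq2}, where $2\le j<i\le n-1$, I would delete the final letter $j$ from $\pi=\tau\,i\,j\in\mathcal{B}_{n,i,j}$ and standardize (decrementing each entry exceeding $j$) to obtain a permutation $\pi'$ of $[n-1]$. Deletion preserves avoidance, and since $j\ge 2$ the minimal letter $1$ remains to the right of the image $n-1$ of $n$, so $\pi'\in\mathcal{B}_{n-1}$ and ends in $i-1$. The reverse map increments every entry at least $j$ and appends $j$; here the hypothesis $j<i$ is exactly what makes this safe, since the new terminal pair $ij$ is \emph{descending} and so $j$ cannot play the ``$3$'' of a $41\overline{23}$, while any adjacent ascending pair of entries below $j$ would, together with the terminal letter $i-1\ge j$ of $\pi'$, already constitute a $\overline{12}3$ in $\pi'$. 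This yields, for each admissible $j$, a bijection $\mathcal{B}_{n,i,j}\leftrightarrow\mathcal{B}_{n-1,i-1}$ and hence $b(n,i,j)=b(n-1,i-1)$.

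Equation \eqref{bneq1} treats the case of a terminal $1$. Here I would delete the final $1$ and standardize; appending the global minimum at the end is harmless by the terminal-letter principle above, so this is a bijection of $\mathcal{B}_{n,i,1}$ onto the set of \emph{all} members of $\mathcal{L}_{n-1}^{*}$ ending in $i-1$ (the constraint $i\le n-1$ keeps the image off the excluded permutation). Splitting that set by the location of its minimal entry, the members in $\mathcal{B}_{n-1}$ contribute $b(n-1,i-1)$, those in $\mathcal{C}_{n-1}$ contribute $c(n-1,i-1)$, and those in which both $1$ and $2$ lie to the left of the maximum are, via the superfluous-block reduction used to derive \eqref{andef0}, sent bijectively (on peeling the maximal decreasing interval $\{2,\dots,d'\}$) to members of $\mathcal{C}_{n-d'}$ ending in $i-d'$. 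Re-indexing by $d=i-d'$ merges the term $c(n-1,i-1)$ (the case $d=i-1$) with $\sum_{d'\ge 2}c(n-d',i-d')$ into $\sum_{d=2}^{i-1}c(n-i+d,d)$, which is the asserted identity.

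Equation \eqref{bneq3}, the case of a penultimate $1$, is where the real work lies and is the step I expect to be the main obstacle. The terminal pair $1\,j$ is now an \emph{ascent}, so deleting either of its letters disturbs an adjacency relevant to $41\overline{23}$, and the count genuinely depends on how the remaining entries sit around $n$. My plan is to classify $\pi\in\mathcal{B}_{n,1,j}$ first by the set of ``medium'' entries $\{2,\dots,j-1\}$ lying to the left of $n$: selecting a $(d-2)$-subset produces the factor $\binom{j-2}{d-2}$, and the permutations in which every entry exceeding $j$ other than $n$ also lies to the left of $n$ turn out to be forced, the medium entries then ranging freely and contributing the base term $2^{j-2}=\sum_{d=2}^{j}\binom{j-2}{d-2}$. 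When instead some entry exceeding $j$ lies to the right of $n$, the pivot $k$ enters as $k=j+1+\#\{\text{entries in }(j,n)\text{ left of }n\}$, equivalently $n-k=\#\{\text{entries exceeding }j\text{ that lie right of }n\}$, and I would delete $n$, the entry $1$, and the chosen medium entries, together with the appropriate portion of the decreasing prefix, to land in $\mathcal{B}_{n-d}$ ending in $k-d$ or in $\mathcal{C}_{n-\ell}$ ending in $k-\ell$ according to the relative order of the smallest and largest surviving entries; the secondary index $\ell$ arises from a further block-peel in the $\mathcal{C}$-case exactly as in \eqref{bneq1}. Summing $\binom{j-2}{d-2}b(n-d,k-d)$ and $\binom{j-2}{d-2}c(n-\ell,k-\ell)$ over the admissible ranges then gives \eqref{bneq3}. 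The delicate point throughout is that removing $n$ or a prefix letter can create or destroy an occurrence of $\overline{12}3$ or $41\overline{23}$ by altering adjacencies, so that the bulk of the argument will consist in verifying that the chosen deletions are pattern-neutral and that the interacting subcases partition $\mathcal{B}_{n,1,j}$ exactly.
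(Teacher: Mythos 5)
Your treatment of the boundary values, of \eqref{bneq2}, and of \eqref{bneq1} is sound. For \eqref{bneq1} you delete the terminal $1$ first and only afterwards classify the image inside the set of members of $\mathcal{L}_{n-1}^*$ ending in $i-1$ (splitting off $\mathcal{B}_{n-1}$, $\mathcal{C}_{n-1}$, and the block-peelable remainder); this is a legitimate reorganization of the intended argument and your reindexing $d=i-d'$ does recover $\sum_{d=2}^{i-1}c(n-i+d,d)$. The genuine gap is in \eqref{bneq3}, which you yourself identify as the main obstacle and which you leave as a plan rather than a proof; moreover, three structural claims in that plan are incorrect. First, the asserted identity $n-k=\#\{\text{entries exceeding } j \text{ lying right of } n\}$ is false: $\rho=564312\in\mathcal{B}_{6,1,2}$ has $k=3$, yet only the two letters $4,3$ exceed $j=2$ and lie to the right of $6$. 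Second, the factor $\binom{j-2}{d-2}$ does not count the letters of $[2,j-1]$ lying to the \emph{left} of $n$; it counts those lying strictly after the pivot $k$ (in the block $\gamma$ just before the terminal $1j$), and medium letters may also sit between $n$ and $k$ — in $\rho=6735214\in\mathcal{B}_{7,1,4}$ the letter $3$ is to the right of $7$ but precedes $k=5$, so it belongs to neither of your two bins. Third, and most seriously, the deletion you propose (remove $n$, the letter $1$, the chosen medium letters, and part of the decreasing prefix) cannot produce the claimed objects: removing $n$ destroys the reference letter with respect to which membership in $\mathcal{B}$ versus $\mathcal{C}$ is defined, and the surviving permutation ends in the standardization of $j$, not in $k-d$, so it is not what $b(n-d,k-d)$ counts.

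The decomposition that actually works writes $\rho=\alpha\beta\gamma 1j$ with $\beta$ running from $n$ to the pivot $k$ and $\gamma\subseteq[2,j-1]$, and deletes only the terminal segment $\gamma 1j$ (of size $d$), keeping $n$ and the entire prefix intact; the resulting permutation of length $n-d$ then ends in $k-d$, and one splits according to whether its minimal letter lies to the right of its maximum (contributing $b(n-d,k-d)$) or directly to its left, in which case a further peel of the initial interval $[\ell]$ of small letters lands in a $\mathcal{C}$-class and yields the triple sum after reindexing. One must still verify that deleting $\gamma 1j$ neither creates nor destroys occurrences of $\overline{12}3$ or $41\overline{23}$ — this is where $k>j$ is essential — and that the case $k=n$ is exactly the $2^{j-2}$ term. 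None of this verification appears in your proposal, so \eqref{bneq3}, and with it the lemma, is not established as written.
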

\begin{proof}
Note first that a member of $\mathcal{B}_n$ cannot end in $i,j$ such that $2 \leq i <j \leq n-1$ for otherwise $41\overline{23}$ would be present with $n1ij$, whence $b(n,i,j)=0$ for such $i$ and $j$.  Note that $\mathcal{B}_{n,n,j}$ is empty if $j \geq 2$ and consists of the single permutation $(n-1)(n-2)\cdots 2n1$ if $j=1$, whence $b(n,n,j)=\delta_{j,1}$.  Further, $\mathcal{B}_{n,i,n}$ is empty for all $i$ since $1$ must occur to the right of $n$.  For \eqref{bneq2}, note that the letter $j$ within members of $\mathcal{B}_{n,i,j}$ where $i,j \in [2,n-1]$ with $i>j$ is extraneous concerning the avoidance of both $\overline{12}3$ and $\overline{41}23$.  Thus, the $j$ may be deleted resulting in a member of $\mathcal{B}_{n-1,i-1}$, which implies \eqref{bneq2}.

To show \eqref{bneq1}, suppose $\lambda \in \mathcal{B}_{n,i,1}$ where $i \in [2,n-1]$.  Then $1$ may be deleted from $\lambda$ resulting in a member of $\mathcal{B}_{n-1,i-1}$ if $2$ also occurs to the right of $n$ within $\lambda$.  Otherwise, $2$ occurs to the left of $n$ and thus directly precedes $n$.  Consider the smallest $d \geq 2$ such that $d+1$ occurs somewhere to the right of $n$ within $\lambda$, noting $d \leq i-1$ since $i$ must occur to the right of $n$.  It is seen that each of the letters in $[3,d]$, along with $1$, may be deleted from $\lambda$, resulting in a member of $\mathcal{C}_{n-d+1,i-d+1}$.  Summing over all $d$ then gives $\sum_{d=2}^{i-1}c(n-d+1,i-d+1)=\sum_{d=2}^{i-1}c(n-i+d,d)$ possibilities for the case when $2$ occurs to the left of $n$ in $\lambda$, which implies \eqref{bneq1}.

Finally, to show \eqref{bneq3}, let $\rho \in \mathcal{B}_{n,1,j}$ where $j \in [2,n-1]$ and suppose $k$ is the rightmost letter within $\rho$ belonging to $[j+1,n]$. Then $\rho$ may be written as $\rho=\alpha\beta\gamma 1j$, where $\alpha$ is decreasing and $\beta$ is non-empty and starts with $n$ and ends in $k$ (with $k=n$ possible), whence $\gamma$ decreases as it has all of its letters in $[2,j-1]$. We consider cases based on $k$.  If $k=n$, then there are $2^{j-2}$ possibilities for the letters in $\gamma$, which determines $\rho$ completely in this case as all other letters in $[2,n-1]$ must then occur to the left of $n$ in decreasing order.  So assume $k<n$ and note $k>j$ implies that the section $\gamma 1j$ may be deleted from $\rho$.  Suppose $|\gamma|=d-2$ where $2 \leq d \leq j$ and let $\rho'$ denote the member of $\mathcal{L}_{n-d}$ that results from the deletion of $\gamma 1j$ from $\rho$ (followed by standardization).  If the element $1$ within $\rho'$ occurs to the right of $n-d$, then allowing  $k$ and $d$ to vary gives $\sum_{k=j+1}^{n-1}\sum_{d=2}^j\binom{j-2}{d-2}b(n-d,k-d)$ possibilities for $\rho$ in this case.

Otherwise, $1$ occurs (directly) to the left of $n-d$ and let $\ell\geq 1$ be maximal such that all members of $[\ell]$ occur to the left of $n-d$ within $\rho'$.  Note that $k$ lying to the right of $n$ in $\rho$ implies $\ell \leq k-d-1$.  Further, it is seen that all elements of $[2,\ell]$ may be deleted from $\rho'$ resulting in a member of $\mathcal{C}_{n-d-\ell+1,k-d-\ell+1}$.  Thus, considering all $k$, $d$ and $\ell$ yields
$\sum_{k=j+1}^{n-1}\sum_{d=2}^j\sum_{\ell=1}^{k-d-1}\binom{j-2}{d-2}c(n-\ell-d+1,k-\ell-d+1)$ possibilities for $\rho$ in this case.  This accounts for the second summation on the right side of \eqref{bneq3}, where $\ell$ has been replaced by $\ell-d+1$.  Combining with the prior cases implies \eqref{bneq3} and completes the proof.
\end{proof}

To give the recurrence for $c(n,i,j)$, we will need to consider the array $v(n,j)$ for $n \geq 1$ and $1 \leq j \leq n$ which enumerates the set of $\{\overline{12}3,1\overline{23}\}$-avoiding linear permutations of $[n]$ ending in $j$.

\begin{lemma}\label{reclem2}
If $n \geq 4$, then
\begin{equation}\label{cneq1}
c(n,i,2)=\sum_{d=2}^{i-1}c(n-i+d,d), \qquad 3 \leq i \leq n-1,
\end{equation}
\begin{equation}\label{cneq2}
c(n,i,j)=c(n-1,i-1), \qquad 3 \leq j <i \leq n-1,
\end{equation}
and
\begin{equation}\label{cneq3}
c(n,2,j)=\sum_{k=j+1}^{n-1}\sum_{d=3}^j\sum_{e=0}^{j-d}\binom{j-3}{d-3}\binom{j-d}{e}v(n-d-e-1,k-d-e), \qquad 3 \leq j \leq n-2,
\end{equation}
with $c(n,2,n-1)=2^{n-4}$ and $c(n,i,j)=0$ if $3 \leq i <j \leq n-1$.  Further, for $n \geq 2$, we have $c(n,i,n)=c(n,i,1)=c(n,1,j)=0$ for all $i$ and $j$, with $c(n,n,j)=\delta_{j,2}$ for $1 \leq j \leq n-1$.
\end{lemma}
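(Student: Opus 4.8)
The plan is to mirror the proof of Lemma \ref{reclem1}, disposing first of the degenerate values, then of \eqref{cneq2} and \eqref{cneq1}, and reserving the bulk of the work for \eqref{cneq3}. I will use throughout the structural fact that within any $\rho\in\mathcal{C}_n$ the letters to the left of $n$ decrease (to avoid $\overline{12}3$), so $1$, being the global minimum and lying to the left of $n$, sits immediately before $n$; thus $\rho=P\,n\,\tau$ with $P$ decreasing and ending in $1$, and with $2\in\tau$. The boundary values follow directly: $c(n,i,n)=0$ since $2$ must lie to the right of $n$, so $n$ cannot be final; $c(n,i,1)=c(n,1,j)=0$ since a terminal or next-to-last $1$ forces $n$ into a position contradicting ``$1$ left of $n$'' or ``$2$ right of $n$''; and $c(n,n,j)=\delta_{j,2}$, since an $n$ in next-to-last position forces $2$ to be final and the decreasing prefix then determines the unique permutation $(n-1)\cdots 3\,1\,n\,2$. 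For $3\le i<j\le n-1$ the ending $\cdots i\,j$ is an ascent with $i,j\ge 3$; taking $n$ as the ``$4$'' and the letter $2$ (which lies to the right of $n$, hence before $i$) as the ``$1$'' yields an occurrence $n\cdots 2\cdots i\,j$ of $41\overline{23}$, so $c(n,i,j)=0$ there.

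For \eqref{cneq2} I argue, as for \eqref{bneq2}, that the final letter $j$ (with $3\le j<i$) is extraneous: it cannot be the ``$3$'' of an occurrence of $41\overline{23}$ because the larger letter $i$ directly precedes it, and it cannot be the ``$3$'' of an occurrence of $\overline{12}3$ because any ascending adjacent pair before $j$ with top below $j$ would, together with the larger letter $i$ appearing after it, already violate $\overline{12}3$ in $\rho$. Deleting $j$ and standardizing is then a bijection onto $\mathcal{C}_{n-1}$ ending in $i-1$. For \eqref{cneq1} I reproduce the ``second summation'' argument of \eqref{bneq1}: letting $d\ge 2$ be least with $d+1$ to the right of $n$, the letters $3,\ldots,d$ form the decreasing run just before $1$, and deleting $\{2,3,\ldots,d\}$ gives, after standardization, a member of $\mathcal{C}_{n-d+1}$ ending in $i-d+1$; reindexing $d\mapsto i-d+1$ produces $\sum_{d=2}^{i-1}c(n-i+d,d)$. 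No extra $b$-term appears here (unlike in \eqref{bneq1}) because deleting the final $2$ when $3$ already lies right of $n$ keeps $\rho$ in $\mathcal{C}$.

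The heart of the matter is \eqref{cneq3}, where $\rho\in\mathcal{C}_{n,2,j}$ ends in $2\,j$ with $3\le j\le n-2$. The first key observation is that $P$ contains no letter exceeding $j$: such a letter together with $1$ and the terminal ascent $2\,j$ would form $41\overline{23}$. Hence every letter of $[j+1,n-1]$ lies to the right of $n$; letting $k$ be the rightmost such letter, I write $\rho=P\,n\,\beta\,\gamma\,2\,j$, where $\beta$ runs from the successor of $n$ up to $k$ and $\gamma$ (the letters of $[3,j-1]$ after $k$) must decrease, since an ascent in $\gamma$ followed by the terminal $j$ would give $\overline{12}3$. The decisive step is to show that the standardized block $\beta$ is exactly a $\{\overline{12}3,1\overline{23}\}$-avoiding permutation, i.e.\ an object counted by $v$: it inherits avoidance of $\overline{12}3$, and it must avoid $1\overline{23}$ since an occurrence there, prefixed by $n$, would create $41\overline{23}$ in $\rho$. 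This ``$41\overline{23}\to 1\overline{23}$'' reduction, promoting $n$ to the role of the ``$4$'', is precisely what replaces the arrays $b,c$ by the new array $v$.

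To obtain the triple sum I account for the placement of the $j-3$ letters of $[3,j-1]$: $d-3$ of them go into $\gamma$ (giving $\binom{j-3}{d-3}$), $e$ of the remaining $j-d$ go into $P$ (giving $\binom{j-d}{e}$), and the rest lie in $\beta$; deleting $\{1,2,j,n\}$ together with the $\gamma$-letters and the chosen $P$-letters leaves exactly $\beta$, of length $n-d-e-1$ and ending in the entry standardizing to $k-d-e$, whence the factor $v(n-d-e-1,k-d-e)$. The main obstacle, requiring the most care, is verifying that this assignment is a genuine bijection, i.e.\ that every choice reconstructs a valid member of $\mathcal{C}_{n,2,j}$. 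The facts making the converse work are that $P\subseteq\{1\}\cup[3,j-1]$ is too small to serve as the ``$4$'' of any $41\overline{23}$ (ruling out all prefix--$\tau$ interactions), and that $\beta$ ends in the large letter $k$, which forbids any ascending adjacent pair with small top inside $\beta$ (such a pair, followed by $k$, would violate $\overline{12}3$), so appending $\gamma\,2\,j$ creates no new occurrence of either pattern. Finally, $j=n-1$ falls outside this analysis, as no letter exceeds $j$ to the right of $n$; a direct count shows such $\rho$ are determined by an arbitrary subset of $[3,n-2]$ (split between $P$ and the decreasing block before $2\,(n-1)$), giving $c(n,2,n-1)=2^{n-4}$.
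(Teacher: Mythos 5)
Your proof is correct and follows essentially the same route as the paper's: the same boundary-value arguments, the same deletion maps for \eqref{cneq2} and \eqref{cneq1}, and for \eqref{cneq3} the same decomposition $\rho=\alpha\,1\,n\,\beta\,\gamma\,2\,j$ with the block $\beta$ reduced to a $\{\overline{12}3,1\overline{23}\}$-avoider counted by $v$. The only cosmetic difference is that you assign $d-3$ letters to $\gamma$ and $e$ to the prefix rather than the reverse, which leaves the formula unchanged by symmetry of $\binom{j-3}{d-3}\binom{j-d}{e}$.
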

\begin{proof}
Concerning the boundary values, note that $c(n,i,n)=c(n,i,1)=c(n,1,j)=0$ follows from the definitions and $c(n,n,j)=\delta_{j,2}$ since $\mathcal{C}_{n,n,j}$ is empty if $j \neq 2$, with $\mathcal{C}_{n,n,2}$ consisting of the single permutation $(n-1)(n-2)\cdots 1n2$.  Further, $c(n,i,j)=0$ for $3 \leq i <j \leq n-1$ since a member of $\mathcal{C}_n$ cannot end in an ascent $i,j$ where $i \geq 3$, for otherwise there would be an occurrence of $41\overline{23}$ witnessed by $n2ij$ as $2$ occurs to the right of $n$ by assumption.  Also, the final letter $j$ within a member of $\mathcal{C}_{n,i,j}$ where $3 \leq j <i \leq n-1$ may be deleted resulting in a member of $\mathcal{C}_{n-1,i-1}$, which implies \eqref{cneq2}.  To show \eqref{cneq1}, suppose $\lambda \in \mathcal{C}_{n,i,2}$ where $3 \leq i \leq n-1$ and consider the largest $d \geq 2$ such that all elements of $[3,d]$ occur to the left of $n$.  Then all elements of $[2,d]$ may be deleted from $\lambda$ resulting in a member of $\mathcal{C}_{n-d+1,i-d+1}$.  Summing over all $d$ then yields $\sum_{d=2}^{i-1}c(n-d+1,i-d+1)$ possibilities for $\lambda$ and replacing $d$ with $i+1-d$ in the sum implies \eqref{cneq1}.

To show \eqref{cneq3}, let $\rho \in \mathcal{C}_{n,2,j}$ where $3 \leq j \leq n-1$.  If $j=n-1$, then $\rho$ is expressible as $\rho=\alpha 1n\beta2(n-1)$, where $\alpha$ and $\beta$ are both decreasing (in order to avoid $\overline{12}3$).  Further, any $\rho$ of this form is seen to also avoid $41\overline{23}$.  As $\alpha$ may comprise any subset of $[3,n-2]$, the formula for $c(n,2,n-1)$ follows. So assume $n \geq 5$ and $3 \leq j \leq n-2$.  As in the proof of \eqref{bneq3} above, we let $k$ denote the rightmost letter within $\rho$ belonging to $[j+1,n]$.  Note that $k=n$ is not possible for $j \leq n-2$ would imply $n-1$ must occur to the left of $n$ and thus be the first letter of $\rho$.  But then $(n-1)12j$ is an occurrence $41\overline{23}$, so we must have $j+1 \leq k \leq n-1$.  Thus, $\rho$ can be decomposed as
$$\rho=\alpha 1n\beta \gamma 2j,$$
where $\alpha$ and $\gamma$ are possibly empty and $\beta$ is non-empty with last letter $k$ (and hence $\gamma$ contains only letters in $[3,j-1]$).

Then the following conditions on $\alpha$, $\beta$ and $\gamma$ are necessary for $\rho$ to avoid $\{\overline{12}3,41\overline{23}\}$: (i) $\alpha$ and $\gamma$ are decreasing, (ii) $\alpha$ contains only letters in $[3,j-1]$ and (iii) $\beta$ avoids the patterns $\overline{12}3$ and $1\overline{23}$.  The sections $\alpha$ and $\gamma$ must clearly both decrease so as to avoid $\overline{12}3$. Concerning (ii), note that if $x\geq j+1$ occurred in $\alpha$, then $x12j$ would correspond to a $41\overline{23}$ in $\rho$.  For (iii), the section $\beta$ must avoid $1\overline{23}$, for otherwise there would be a $41\overline{23}$ in $\rho$ where the role of $4$ is played by $n$. We claim that conditions (i)--(iii) are also sufficient for $\rho$ to avoid $\{\overline{12}3,41\overline{23}\}$.  Note that $\rho$ clearly avoids $\overline{12}3$ if (i)--(iii) are satisfied.  On the other hand, suppose to the contrary that $\rho$ contains an occurrence of $41\overline{23}$.  The only way in which this would be possible, given that $\rho$ satisfies (i)--(iii), is for there to exist $a>c>b$ with $a$ lying in $\alpha$ and $b,c$ in $\beta$ such that $b$ directly precedes $c$.  But then $c<a\leq j-1$ implies $c$ is not the last letter of $\beta$ and thus $bck$ would be an occurrence of $\overline{12}3$ in $\beta$, contrary to (iii), which establishes the claim.

To enumerate $\rho$ of the stated form satisfying (i)--(iii), let $|\alpha|=d-3$ and $|\gamma|=e$, where $3 \leq d \leq j$ and $0 \leq e \leq j-d$.  Then there are $\binom{j-3}{d-3} \binom{j-d}{e}$ ways in which to select the elements of $\alpha$ and $\gamma$, which must occur in decreasing order within their respective sections.  Upon standardizing, it is seen that $\beta$ is a permutation having last letter $k-(d-3)-e-3=k-d-e$ and length $n-d-e-1$, and hence is enumerated by $v(n-d-e-1,k-d-e)$.  Considering all possible values of $k$, $d$ and $e$ then yields the summation formula on the right side of \eqref{cneq3} and completes the proof.
\end{proof}

The array value $v(n,i)$ used above is itself defined recursively as follows.

\begin{lemma}\label{reclem3}
We have
\begin{equation}\label{vneq1}
v(n,1)=\sum_{i=1}^{n-1}v(n-1,i), \qquad n \geq 2,
\end{equation}
and
\begin{equation}\label{vneq2}
v(n,j)=\sum_{i=j}^{n-1}v(n-1,i)+\sum_{i=j+1}^n\sum_{d=2}^j\binom{j-2}{d-2}v(n-d,i-d), \qquad 2 \leq j \leq n-1,
\end{equation}
with $v(n,n)=1$ for all $n \geq 1$.
\end{lemma}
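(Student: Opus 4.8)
The plan is to describe the permutations enumerated by $v$ structurally, then to split on the penultimate letter. Write $\mathcal{V}_n$ for the set of $\{\overline{12}3,1\overline{23}\}$-avoiding linear permutations of $[n]$, so that $v(n,j)$ counts its members ending in $j$. I would first record the characterization that $\pi\in\mathcal{V}_n$ if and only if, for every adjacent ascent $\pi_t<\pi_{t+1}$, the entry $\pi_t$ is a left-to-right minimum (equivalent to avoiding $1\overline{23}$) and the entry $\pi_{t+1}$ is a right-to-left maximum (equivalent to avoiding $\overline{12}3$). The boundary value $v(n,n)=1$ follows at once: if $\pi$ ends in $n$, then any adjacent ascent in the prefix would have the letter $n$ occurring later, so the prefix must be decreasing and $\pi=(n-1)(n-2)\cdots 1\,n$ is forced.

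To obtain \eqref{vneq1} and \eqref{vneq2}, fix $\pi\in\mathcal{V}_n$ ending in $j$ and consider whether $\pi_{n-1}>j$ or $\pi_{n-1}<j$. In the first (descent) case, I would delete the final letter $j$ and standardize; since deleting a letter cannot create an occurrence of either pattern, the result is a member of $\mathcal{V}_{n-1}$ ending in some $i$ with $j\le i\le n-1$. The inverse operation appends $j$ at the end (shifting up the letters that are at least $j$): the only new adjacency it creates is a descent, so no new occurrence of $1\overline{23}$ arises, and a new $\overline{12}3$ would require an adjacent ascent whose top is smaller than $j$, which is impossible because such a top is a right-to-left maximum and hence at least the final letter $i\ge j$. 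This gives a bijection accounting for $\sum_{i=j}^{n-1}v(n-1,i)$, which is the entire count when $j=1$ and so already proves \eqref{vneq1}, and is the first summand of \eqref{vneq2}.

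In the second (ascent) case $\pi_{n-1}<j$, the letter $\pi_{n-1}$ is the bottom of the terminal ascent, hence a left-to-right minimum whose only successor is $j$; this forces $\pi_{n-1}=1$. Let $i$ be the rightmost entry of $\pi$ exceeding $j$, which exists since $j\le n-1$. Every entry strictly between $i$ and the terminal $1$ is smaller than $j$, and these entries decrease (an ascent among them would have a top below $j$ that fails to be a right-to-left maximum), so $\pi$ decomposes as $\pi=P\,i\,c_1\cdots c_t\,1\,j$ with $c_1>\cdots>c_t$ drawn from $\{2,\dots,j-1\}$. Setting $d=t+2$ and deleting $c_1,\dots,c_t$ together with $1$ and $j$ — all of which are smaller than $i$ — then standardizing, I would obtain a member of $\mathcal{V}_{n-d}$ ending in $i-d$. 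Summing over the data $j+1\le i\le n$ and $2\le d\le j$, with the factor $\binom{j-2}{d-2}$ recording which $(d-2)$-subset of $\{2,\dots,j-1\}$ forms the tail $c_1\cdots c_t$, yields exactly the second summand of \eqref{vneq2}.

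The step I expect to be the main obstacle is verifying that the inverse of this last map lands in $\mathcal{V}_n$ for every choice of the subset $C\subseteq\{2,\dots,j-1\}$, i.e.\ that appending a decreasing block of entries below $j$ followed by $1$ and then $j$ to a member of $\mathcal{V}_{n-d}$ ending in $i$ never creates a forbidden pattern. I would dispatch this using the characterization above: the only adjacent ascent introduced is the terminal pair $1\,j$, whose bottom $1$ is the global minimum and whose top $j$ is last, so it violates neither condition; and any adjacent ascent already present has its top a right-to-left maximum of the shorter word, hence at least $i>j$, so it remains a right-to-left maximum after the smaller entries are appended and cannot begin an occurrence of $\overline{12}3$. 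Checking that the value $i$ indeed occupies position $i-d$ after standardization — because all $d$ deleted letters lie below $i$ — confirms that the map and its inverse are mutually inverse, completing the proof.
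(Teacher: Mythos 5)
Your proof is correct and follows essentially the same route as the paper's: splitting on whether the penultimate letter exceeds $j$ (deleting the final letter in the descent case, and in the ascent case forcing the penultimate letter to be $1$ and decomposing as $\alpha\, i\, \beta\, 1\, j$ with $\beta$ a decreasing subset of $[2,j-1]$). The left-to-right-minimum/right-to-left-maximum characterization you use to verify the inverse maps is a more explicit justification than the paper gives, but it does not change the argument.
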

\begin{proof}
Let $\mathcal{V}_{n,i}$ denote the set of permutations enumerated by $v(n,i)$ and $\mathcal{V}_n=\cup_{i=1}^n\mathcal{V}_{n,i}$.  Note first that $v(n,n)=1$ since $\mathcal{V}_{n,n}$ consists of the single permutation $(n-1)(n-2)\cdots 1n$.  Formula \eqref{vneq1} follows from removal of $1$ from members of $\mathcal{V}_{n,1}$, which is seen to yield all members of $\mathcal{V}_{n-1}$.  To show \eqref{vneq2}, let $\lambda \in \mathcal{V}_{n,j}$ where $2 \leq j \leq n-1$.  If the penultimate letter of $\lambda$ is greater than $j$, then $j$ may be deleted yielding $\sum_{i=j}^{n-1}v(n-1,i)$ possibilities.  Otherwise, the penultimate letter of $\lambda$ must be $1$ in order to avoid $1\overline{23}$.  In this case, we may write $\lambda=\alpha i \beta 1 j$, where $i \in [j+1,n]$ and $\beta$ consists of letters in $[2,j-1]$ and is decreasing, with $\alpha$ or $\beta$ possibly empty.  Note that $i>j$ implies $\lambda$ avoids $\{\overline{12}3,1\overline{23}\}$ if and only if the initial section $\alpha i$ does.  Let $|\beta|=d-2$ for some $2 \leq d \leq j$.  Then there are $\binom{j-2}{d-2}$ ways in which to choose the letters in $\beta$ and hence $\binom{j-2}{d-2}v(n-d,i-d)$ possibilities for $\lambda$. Considering all values of $i$ and $d$ then accounts for the second sum on the right side of \eqref{vneq2} and completes the proof.
\end{proof}

\section{Computation of generating function formula}

In this section, we calculate an explicit formula for $A(x)$ by first determining the gf's of the arrays $v(n,i)$, $c(n,i,j)$ and $b(n,i,j)$.

\subsection{The generating function for $v(n,i)$}

Define $V_n(p)=\sum_{j=1}^nv(n,j)p^{j-1}$ and $V(x,p)=\sum_{n\geq1}V_n(p)x^n$. Note that $V(x,p)$ is the gf that enumerates the set of linear $\{\overline{12}3,1\overline{23}\}$-avoiding permutations of $[n]$ for $n \geq 1$ according to the last letter statistic (marked by $p$).  We seek to derive an explicit formula for $V(x,p)$.

In order to do so, first note that from \eqref{vneq2}, one gets
\begin{align*}
\sum_{n\geq3}\sum_{j=2}^{n-1}v(n,j)p^{j-1}x^n&=
\sum_{n\geq3}\sum_{j=2}^{n-1}\sum_{i=j}^{n-1}v(n-1,i)p^{j-1}x^n\\
&\quad+\sum_{n\geq3}\sum_{j=2}^{n-1}\sum_{i=j+1}^n\sum_{d=2}^j\binom{j-2}{d-2}v(n-d,i-d)p^{j-1}x^n,
\end{align*}
which implies by \eqref{vneq1},
\begin{align*}
&V(x,p)-xV(x,1)-\frac{x}{1-px}-\frac{px}{1-p}(V(x,1)-V(x,p))\\
&=\sum_{n\geq3}\sum_{i=3}^{n}\sum_{d=2}^{i-1}\sum_{j=d}^{i-1}\binom{j-2}{d-2}v(n-d,i-d)p^{j-1}x^n\\
&=\sum_{n\geq 3}\sum_{d=0}^{n-3}\sum_{j=0}^{n-d-3}\sum_{i=j+1}^n\binom{j+d}{d}v(n-d-2,i-d-2)p^{j+d+1}x^n\\
&=\sum_{n\geq0}\sum_{d\geq0}\sum_{j\geq0}\sum_{i=j+d+3}^{n+j+d+3}\binom{j+d}{d}v(n+j+1,i-d-2)p^{j+d+1}x^{n+j+d+3}\\
&=\sum_{n\geq0}\sum_{d\geq0}\sum_{j\geq0}\sum_{i=0}^{n}\binom{j+d}{d}v(n+j+1,i+j+1)p^{j+d+1}x^{n+j+d+3}\\
&=\sum_{j\geq0}\sum_{n\geq j}\sum_{i=j}^nv(n+1,i+1)\frac{p^{j+1}x^{n+3}}{(1-px)^{j+1}}=\frac{px^2}{1-px}\sum_{n\geq1}\sum_{i=1}^n\sum_{j=0}^{i-1}v(n,i)\frac{p^jx^n}{(1-px)^j}\\
&=\frac{px^2}{1-p-px}\sum_{n\geq1}\sum_{i=1}^nv(n,i)x^n\left(1-\frac{p^i}{(1-px)^i}\right)\\
&=\frac{px^2}{1-p-px}\left(V(x,1)-\frac{p}{1-px}V(x,\frac{p}{1-px})\right).
\end{align*}

Hence, the gf $V(x,p)$ satisfies the functional equation
\begin{align}\label{eqFV}
V(x,p)&=\frac{(1-p)x}{(1-p+px)(1-px)}+\frac{(1-p-p^2x)x}{(1-p+px)(1-p-px)}V(x,1)\nonumber\\
&\quad-\frac{(1-p)p^2x^2}{(1-px)(1-p+px)(1-p-px)}V(x,\frac{p}{1-px}).
\end{align}
Note that $V(x,0)=x+xV(x,1)$, upon taking $p=0$ in \eqref{eqFV}, which may also be realized combinatorially from the definition of $V(x,p)$. Thus,
\begin{align}\label{eqFV1}
V(x,p)&=\frac{(-1+p+px-p^2x^2)px}{(1-px)(1-p+px)(1-p-px)}+\frac{1-p-p^2x}{(1-p+px)(1-p-px)}V(x,0)\nonumber\\
&\quad-\frac{(1-p)p^2x^2}{(1-px)(1-p+px)(1-p-px)}V(x,\frac{p}{1-px}).
\end{align}

Let $a(p)=\frac{(-1+p+px-p^2x^2)px}{(1-px)(1-p+px)(1-p-px)}+\frac{1-p-p^2x}{(1-p+px)(1-p-px)}V(x,0)$ and $b(p)=\frac{(1-p)p^2x^2}{(1-px)(1-p+px)(1-p-px)}$.
By iterating \eqref{eqFV1} an infinite number of times (assuming $|x|<1$ is sufficiently small in absolute in value), we obtain
\begin{align*}
V(x,p)&=\sum_{j\geq0}(-1)^ja\left(\frac{p}{1-jpx}\right)\prod_{i=0}^{j-1}b\left(\frac{p}{1-ipx}\right)\\
&=\sum_{j\geq0}(-1)^j\frac{(-1+p-jp^2x+(2j+1)px-(j^2+j+1)p^2x^2)p^{2j+1}x^{2j+1}}
{(1-p+px)\prod_{i=1}^{j+1}(1-ipx)\prod_{i=1}^{j+1}(1-p-ipx)}\\
&\quad+V(x,0)\sum_{j\geq0}(-1)^j\frac{((1-jpx)^2-p+(j-1)p^2x)p^{2j}x^{2j}}{
(1-p+px)\prod_{i=1}^j(1-ipx)\prod_{i=1}^{j+1}(1-p-ipx)}.
\end{align*}
Thus,
\begin{align*}
V(x,1)&=-\sum_{j\geq0}\frac{(j+1-(j^2+j+1)x)x^j}
{(j+1)!\prod_{i=1}^{j+1}(1-ix)}-V(x,0)\sum_{j\geq0}\frac{((1-jx)^2-1+(j-1)x)x^{j-2}}{
(j+1)!\prod_{i=1}^j(1-ix)},
\end{align*}
which, by  $V(x,0)=x+xV(x,1)$, implies
\begin{align*}
V(x,0)&=\frac{x-\sum_{j\geq0}\frac{(j+1-(j^2+j+1)x)x^{j+1}}
{(j+1)!\prod_{i=1}^{j+1}(1-ix)}}{1+\sum_{j\geq0}\frac{((1-jx)^2-1+(j-1)x)x^{j-1}}{
(j+1)!\prod_{i=1}^j(1-ix)}}.
\end{align*}

Hence, we can state the following result.

\begin{lemma}\label{lem1}
The generating function $V(x,p)$, is given by
\begin{align*}
V(x,p)&=\sum_{j\geq0}(-1)^j\frac{(-1+p-jp^2x+(2j+1)px-(j^2+j+1)p^2x^2)p^{2j+1}x^{2j+1}}
{(1-p+px)\prod_{i=1}^{j+1}(1-ipx)\prod_{i=1}^{j+1}(1-p-ipx)}\\
&\quad+V(x,0)\sum_{j\geq0}(-1)^j\frac{((1-jpx)^2-p+(j-1)p^2x)p^{2j}x^{2j}}{
(1-p+px)\prod_{i=1}^j(1-ipx)\prod_{i=1}^{j+1}(1-p-ipx)},
\end{align*}
where $$V(x,0)=\frac{\sum_{j\geq1}\frac{(j+1-(j^2+j+1)x)x^{j+1}}
{(j+1)!\prod_{i=1}^{j+1}(1-ix)}}{\sum_{j\geq1}\frac{(j+1-j^2x)x^j}{
(j+1)!\prod_{i=1}^j(1-ix)}}.$$
\end{lemma}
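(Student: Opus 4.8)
The plan is to solve the functional equation \eqref{eqFV1} by iteration, so I take it as my starting point and write it compactly as
$$V(x,p)=a(p)-b(p)\,V\!\left(x,\tfrac{p}{1-px}\right),$$
with $a(p)$ and $b(p)$ as defined just above the statement. The first step is to understand the substitution $T\colon p\mapsto \frac{p}{1-px}$ appearing in the shifted argument. A one-line induction gives $T^j(p)=\frac{p}{1-jpx}$ for every $j\ge 0$, so that the $j$-th application of the functional equation replaces $p$ by $\frac{p}{1-jpx}$.

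Next I would iterate. Unfolding $V(x,p)$ repeatedly produces, after $j$ steps, the partial sum $\sum_{k=0}^{j-1}(-1)^k a(T^k(p))\prod_{i=0}^{k-1}b(T^i(p))$ together with a remainder $(-1)^j\big(\prod_{i=0}^{j-1}b(T^i(p))\big)V(x,T^j(p))$. Working in $\mathbb{Q}(p)[[x]]$, one checks $b(p)=O(x^2)$, whence the trailing product is $O(x^{2j})$ and the remainder vanishes $x$-adically as $j\to\infty$. This justifies the infinite iteration $V(x,p)=\sum_{j\ge0}(-1)^j a(T^j(p))\prod_{i=0}^{j-1}b(T^i(p))$.

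The substantive computation is the closed evaluation of this sum. Setting $q_i=T^i(p)=\frac{p}{1-ipx}$ and recording the three identities $1-q_ix=\frac{1-(i+1)px}{1-ipx}$, $1-q_i+q_ix=\frac{1-p-(i-1)px}{1-ipx}$ and $1-q_i-q_ix=\frac{1-p-(i+1)px}{1-ipx}$, each factor $b(q_i)$ collapses and the product $\prod_{i=0}^{j-1}b(q_i)$ telescopes into a ratio of products of the types $\prod(1-ipx)$ and $\prod(1-p-ipx)$ (the upper limits being filled out to $j+1$ once the denominator of $a(T^j(p))$ is absorbed); the same identities applied to $a(T^j(p))$ yield the two polynomial numerators $-1+p-jp^2x+(2j+1)px-(j^2+j+1)p^2x^2$ and $(1-jpx)^2-p+(j-1)p^2x$ recorded in the statement. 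Separating the $V(x,0)$-free part from the part proportional to $V(x,0)$ gives exactly the displayed formula for $V(x,p)$.

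Finally, to determine $V(x,0)$, I would specialize to $p=1$. Here several factors degenerate — $1-p=0$ while $1-p+px=x$ and $1-p-ipx=-ix$, so $\prod_{i=1}^{j+1}(1-p-ipx)$ collapses to $(-x)^{j+1}(j+1)!$ — and the main care is in tracking the resulting signs and factorial denominators; the outcome is the linear relation for $V(x,1)$ displayed before the statement. Combining it with $V(x,0)=x+xV(x,1)$ (obtained by setting $p=0$ in \eqref{eqFV}) yields a single linear equation for $V(x,0)$, which I solve; a last tidy-up — peeling off the $j=0$ terms of the two series (they contribute $x$ and $-1$ and so cancel the leading $x$ and $1$) and simplifying $(1-jx)^2-1+(j-1)x=-x(j+1-j^2x)$ in the denominator sum — reduces the answer to the stated ratio. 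I expect the $p=1$ specialization, with its degenerate factors and factorial bookkeeping, to be the main obstacle; the telescoping of the $b$-product is routine once the three auxiliary identities are in hand.
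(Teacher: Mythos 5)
Your proposal is correct and follows essentially the same route as the paper: iterating \eqref{eqFV1} to obtain $V(x,p)=\sum_{j\geq0}(-1)^ja(p/(1-jpx))\prod_{i=0}^{j-1}b(p/(1-ipx))$, simplifying via the identities for $T^j(p)=p/(1-jpx)$, then setting $p=1$ and combining with $V(x,0)=x+xV(x,1)$ to solve for $V(x,0)$. Your explicit $x$-adic justification of the infinite iteration and the bookkeeping of the degenerate factors at $p=1$ match the paper's (more tersely stated) argument.
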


\subsection{The generating function for $c(n,i,j)$}

Define $C_n(v,u)=\sum_{i=1}^n\sum_{j=1}^nc(n,i,j)v^{i-2}u^{j-2}$ and $C(x,v,u)=\sum_{n\geq2}C_n(v,u)x^n$.  Then the gf $C(x,v,u)$ enumerates the set of linear $\{\overline{12}3,41\overline{23}\}$-avoiding permutations of $[n]$ for $n \geq 3$ such that $1$ occurs to the left of $n$ and $2$ to the right according to the penultimate and final letter statistics (marked by $v$ and $u$, respectively). A similar interpretation may be given for $B(x,v,u)$ defined below.  In this subsection, we seek to determine a formula for $C(x,v,u)$.

In order to do so, first note that from the definitions and the basic properties of $c(n,i,j)$, we have
\begin{align}\label{eqcca1}
C(x,v,u)&=\sum_{n\geq4}\sum_{j=3}^{n-1}c(n,2,j)u^{j-2}x^n
+\sum_{n\geq3}\sum_{i=3}^n\sum_{j=2}^{n-1}c(n,i,j)v^{i-2}u^{j-2}x^n.
\end{align}
Note that by \eqref{cneq3}, we have
\begin{align*}
&\sum_{n\geq4}\sum_{j=3}^{n-1}c(n,2,j)u^{j-2}x^n-\sum_{n\geq4}2^{n-4}u^{n-3}x^n\\
&=\sum_{n\geq5}\sum_{j=3}^{n-2}c(n,2,j)u^{j-2}x^n\\
&=\sum_{n\geq5}\sum_{j=3}^{n-2}\sum_{k=j+1}^{n-1}\sum_{d=3}^j\sum_{e=0}^{j-d}\binom{j-3}{d-3}\binom{j-d}{e}v(n-d-e-1,k-d-e)u^{j-2}x^n\\
&=\sum_{j\geq3}\sum_{d=3}^{j}\sum_{e=0}^{j-d}\sum_{k \geq j+1}\sum_{n\geq k+1}\binom{j-3}{d-3}\binom{j-d}{e}v(n-d-e-1,k-d-e)u^{j-2}x^n\\
&=\sum_{j\geq3}\sum_{d=3}^{j}\sum_{e=d}^{j}\sum_{k \geq j+1}\sum_{n\geq k+1}\binom{j-3}{d-3}\binom{j-d}{e-d}v(n-e-1,k-e)u^{j-2}x^n\\
&=\sum_{j\geq3}\sum_{d=3}^{j}\sum_{e=d}^{j}\sum_{k \geq j-e+1}\sum_{n\geq k}\binom{j-3}{d-3}\binom{j-d}{e-d}v(n,k)u^{j-2}x^{n+e+1}\\
&=\sum_{d\geq3}\sum_{e \geq d}\sum_{j \geq e}\sum_{k \geq j-e+1}\sum_{n\geq k}\binom{j-3}{d-3}\binom{j-d}{e-d}v(n,k)u^{j-2}x^{n+e+1}\\
&=\sum_{d\geq3}\sum_{e\geq0}\sum_{j\geq0}\sum_{k\geq j+1}\sum_{n \geq k}\binom{j+e+d-3}{d-3}\binom{j+e}{e}v(n,k)u^{j+e+d-2}x^{n+e+d+1}\\
&=\sum_{j \geq0}\sum_{n\geq j+1}\sum_{k=j+1}^nv(n,k)ux^{n-j+4}\sum_{e \geq0}\binom{j+e}{e}\sum_{d \geq 3}\binom{j+e+d-3}{d-3}(ux)^{j+e+d-3}\\
&=\sum_{j\geq0}\sum_{n\geq j+1}\sum_{k=j+1}^nv(n,k)\frac{u^{j+1}x^{n+4}}{(1-2ux)^{j+1}}=\sum_{n\geq1}\sum_{k=1}^n\sum_{j=0}^{k-1}v(n,k)\frac{u^{j+1}x^{n+4}}{(1-2ux)^{j+1}}\\
&=\frac{ux^4}{1-u-2ux}\sum_{n\geq1}\sum_{k=1}^nv(n,k)x^n\left(1-\frac{u^{k}}{(1-2ux)^k}\right)\\
&=\frac{ux^4}{1-u-2ux}\left(V(x,1)-\frac{u}{1-2ux}V(x,\frac{u}{1-2ux})\right).
\end{align*}

Hence, by \eqref{eqcca1}, we get
\begin{align}\label{eqcca2}
C(x,v,u)&=\frac{ux^4}{1-u-2ux}\left(V(x,1)-\frac{u}{1-2ux}V(x,\frac{u}{1-2ux})\right)+\frac{ux^4}{1-2ux}\nonumber\\
&\quad+\sum_{n\geq3}\sum_{i=3}^n\sum_{j=2}^{n-1}c(n,i,j)v^{i-2}u^{j-2}x^n.
\end{align}
By \eqref{cneq1} and \eqref{cneq2}, we have
\begin{align*}
&\sum_{n\geq3}\sum_{i=3}^n\sum_{j=2}^{n-1}c(n,i,j)v^{i-2}u^{j-2}x^n\\
&=\sum_{n\geq5}\sum_{i=3}^{n-2}\sum_{j=i+1}^{n-1}c(n,i,j)v^{i-2}u^{j-2}x^n
+\sum_{n\geq3}\sum_{i=3}^n\sum_{j=2}^{i-1}c(n,i,j)v^{i-2}u^{j-2}x^n\\
&=\sum_{n\geq5}\sum_{i=4}^{n-1}\sum_{j=3}^{i-1}c(n,i,j)v^{i-2}u^{j-2}x^n
+\sum_{n\geq3}\sum_{i=3}^nc(n,i,2)v^{i-2}x^n\\
&=u\sum_{n\geq5}\sum_{i=4}^{n-1}\sum_{\ell=1}^{n-1}c(n-1,\ell,i-1)\frac{v^{i-2}(1-u^{i-3})x^n}{1-u}
+\sum_{n\geq3}\sum_{i=3}^nc(n,i,2)v^{i-2}x^n\\
&=\frac{uvx}{1-u}(C(x,1,v)-C(x,1,uv))
+\sum_{n\geq4}\sum_{i=3}^{n-1}c(n,i,2)v^{i-2}x^n
+\sum_{n\geq3}c(n,n,2)v^{n-2}x^n\\
&=\frac{uvx}{1-u}(C(x,1,v)-C(x,1,uv))
+\sum_{n\geq4}\sum_{i=3}^{n-1}\sum_{d=2}^{i-1}c(n-i+d,d)v^{i-2}x^n
+\frac{vx^3}{1-vx}\\
&=\frac{uvx}{1-u}(C(x,1,v)-C(x,1,uv))
+\frac{vx}{1-vx}\sum_{n\geq3}\sum_{d=2}^{n-1}c(n,d)v^{d-2}x^n
+\frac{vx^3}{1-vx}\\
&=\frac{uvx}{1-u}(C(x,1,v)-C(x,1,uv))
+\frac{vx}{1-vx}C(x,1,v)+\frac{vx^3}{1-vx}.
\end{align*}

By \eqref{eqcca2}, it follows that $C(x,v,u)$ satisfies
\begin{align}\label{eqcca3}
C(x,v,u)&=\frac{ux^4}{1-u-2ux}\left(V(x,1)-\frac{u}{1-2ux}V(x,\frac{u}{1-2ux})\right)+\frac{ux^4}{1-2ux}\nonumber\\
&\quad+\frac{uvx}{1-u}(C(x,1,v)-C(x,1,uv))
+\frac{vx}{1-vx}C(x,1,v)+\frac{vx^3}{1-vx}.
\end{align}
To solve \eqref{eqcca3}, we apply the \emph{kernel method} \cite{HouM} and take $u=1/(1-x)$ and $v=1$ to obtain
\begin{align}\label{eqccx11}
C(x,1,1)&=\frac{((1-x)V(x,\frac{1}{1-3x})-(1-4x+3x^2)V(x,1)+3(1-2x-x^2))x^3}{3(1-2x)(1-3x)}.
\end{align}
By \eqref{eqcca3} with $v=1$, we have
\begin{align}\label{eqccx1u}
C(x,1,u)&=\frac{(1-ux)x}{(1-x)(1-u+ux)}C(x,1,1)
+\frac{(1-u)ux^4}{(1-u+ux)(1-u-2ux)}V(x,1)\nonumber\\
&\quad-\frac{(1-u)u^2x^4}{(1-u+ux)(1-u-2ux)(1-2ux)}V(x,\frac{u}{1-2ux})\nonumber\\
&\quad+\frac{(1-u)(1-ux-ux^2)x^3}{(1-x)(1-u+ux)(1-2ux)}.
\end{align}

Hence, we can state the following result.

\begin{lemma}\label{lem2}
The generating function $C(x,v,u)$ is given by
\begin{align*}
C(x,v,u)&=\frac{ux^4}{1-u-2ux}\left(V(x,1)-\frac{u}{1-2ux}V(x,\frac{u}{1-2ux})\right)+\frac{ux^4}{1-2ux}\\
&\quad+\frac{uvx}{1-u}(C(x,1,v)-C(x,1,uv))
+\frac{vx}{1-vx}C(x,1,v)+\frac{vx^3}{1-vx},
\end{align*}
where $V(x,p)$ is given in Lemma \ref{lem1} and $C(x,1,u)$ by \eqref{eqccx1u}.
\end{lemma}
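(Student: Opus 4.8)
The plan is to observe that the functional equation displayed in the lemma is nothing other than \eqref{eqcca3}, which has already been derived from the recurrences of Lemma \ref{reclem2} via the long chain of summation manipulations culminating in \eqref{eqcca2} and the subsequent evaluation of the remaining double sum over $c(n,i,j)$ with $i \geq 3$. Consequently, the only genuine content left to establish for the lemma is the explicit formula \eqref{eqccx1u} for $C(x,1,u)$, together with the auxiliary formula \eqref{eqccx11} for $C(x,1,1)$ on which it depends.

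First I would specialize \eqref{eqcca3} to $v=1$. Then $C(x,1,uv)$ becomes $C(x,1,u)$ and $C(x,1,v)$ becomes $C(x,1,1)$, so the unknown $C(x,1,u)$ appears on both sides; collecting its terms produces the kernel factor $1+\frac{ux}{1-u}=\frac{1-u+ux}{1-u}$. Dividing through by this factor and simplifying the resulting rational coefficients yields \eqref{eqccx1u}, with $C(x,1,1)$ remaining as the sole unknown. The two simplifications worth flagging here are that the combined coefficient of $C(x,1,1)$ collapses to $\frac{(1-ux)x}{(1-x)(1-u+ux)}$ (after the numerator $ux(1-x)+x(1-u)$ reduces to $x(1-ux)$), and that the pure constant terms $\frac{ux^4}{1-2ux}+\frac{x^3}{1-x}$ combine, after clearing the kernel, to $\frac{(1-u)(1-ux-ux^2)x^3}{(1-x)(1-u+ux)(1-2ux)}$; the $V(x,1)$ and $V(x,\frac{u}{1-2ux})$ terms carry through essentially verbatim up to the overall factor $\frac{1-u}{1-u+ux}$.

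To pin down $C(x,1,1)$ I would apply the \emph{kernel method}. The kernel $\frac{1-u+ux}{1-u}$ vanishes precisely at $u=1/(1-x)$, which is a well-defined formal power series in $x$ with constant term $1$; substituting it into each coefficient $C_n(1,u)$, a polynomial in $u$, keeps $C(x,1,u)$ a bona fide power series, so the left-hand side of the kernel-cleared equation is annihilated and its right-hand side must therefore vanish. Using the evaluations $\frac{u}{1-2ux}=\frac{1}{1-3x}$, $1-u-2ux=\frac{-3x}{1-x}$, and $\frac{ux}{1-u}=-1$ at $u=1/(1-x)$, the vanishing condition becomes a single linear equation in $C(x,1,1)$; solving it and using $1-4x+3x^2=(1-x)(1-3x)$ gives \eqref{eqccx11}. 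Feeding \eqref{eqccx11} and the formula of Lemma \ref{lem1} for $V$ back into \eqref{eqccx1u}, and thence into \eqref{eqcca3}, completes the proof.

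The main obstacle I anticipate is not conceptual but one of careful bookkeeping: the rational-function algebra in combining the several summands after dividing by the kernel is error-prone, and one must verify at each stage that the apparent poles at $u=1/(1-x)$ present in the individual terms of \eqref{eqccx1u} cancel, so that the substitution is legitimate and $C(x,1,1)$ is genuinely determined rather than obtained by an illegal evaluation. Confirming that $C(x,1,u)$ stays a formal power series under the kernel substitution, thereby justifying the kernel method rigorously, is the point that requires the most care.
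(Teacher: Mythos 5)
Your proposal is correct and follows essentially the same route as the paper: the functional equation asserted in the lemma is exactly \eqref{eqcca3}, and the paper likewise obtains \eqref{eqccx11} by applying the kernel method with $v=1$ and $u=1/(1-x)$ (where $\frac{ux}{1-u}=-1$ makes $C(x,1,u)$ drop out) and then \eqref{eqccx1u} by setting $v=1$ and solving for $C(x,1,u)$; the simplifications you flag --- the coefficient of $C(x,1,1)$ collapsing to $\frac{(1-ux)x}{(1-x)(1-u+ux)}$, the constant term $\frac{(1-u)(1-ux-ux^2)x^3}{(1-x)(1-u+ux)(1-2ux)}$, and the evaluations $\frac{u}{1-2ux}=\frac{1}{1-3x}$ and $1-u-2ux=\frac{-3x}{1-x}$ --- all check out against \eqref{eqccx1u} and \eqref{eqccx11}. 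The only portion you take as given rather than reproduce is the derivation of \eqref{eqcca3} itself from the recurrences of Lemma \ref{reclem2} (the split into the $i=2$ and $i\geq 3$ contributions and the ensuing summation manipulations), which constitutes the bulk of the paper's argument for this lemma but which you correctly identify and locate.
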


\subsection{The generating function for $b(n,i,j)$}

Define $B_n(v,u)=\sum_{i=1}^n\sum_{j=1}^nb(n,i,j)v^{i-1}u^{j-1}$ and $B(x,v,u)=\sum_{n\geq2}B_n(v,u)x^n$. From the definitions and basic conditions on $b(n,i,j)$, we have
\begin{align}\label{eqbba1}
B(x,v,u)&=\sum_{n\geq4}\sum_{i=2}^{n-2}\sum_{j=i+1}^{n-1}b(n,i,j)v^{i-1}u^{j-1}x^n
+\sum_{n\geq3}\sum_{j=2}^{n-1}b(n,1,j)u^{j-1}x^n\nonumber\\
&\quad+\sum_{n\geq2}\sum_{i=2}^{n}\sum_{j=1}^{i-1}b(n,i,j)v^{i-1}u^{j-1}x^n\nonumber\\
&=\sum_{n\geq3}\sum_{j=2}^{n-1}b(n,1,j)u^{j-1}x^n
+\sum_{n\geq4}\sum_{i=3}^{n-1}\sum_{j=2}^{i-1}b(n,i,j)v^{i-1}u^{j-1}x^n
+\sum_{n\geq2}\sum_{i=2}^nb(n,i,1)v^{i-1}x^n.
\end{align}

By \eqref{bneq2}, we get
\begin{align*}
&\sum_{n\geq4}\sum_{i=3}^{n-1}\sum_{j=2}^{i-1}b(n,i,j)v^{i-1}u^{j-1}x^n\\
&=\sum_{n\geq4}\sum_{i=3}^{n-1}\sum_{j=2}^{i-1}b(n-1,i-1)v^{i-1}u^{j-1}x^n=\frac{uvx}{1-u}(B(x,1,v)-B(x,1,uv)).
\end{align*}
Hence, by \eqref{eqbba1}, we have
\begin{align}\label{eqbba2}
B(x,v,u)&=\sum_{n\geq3}\sum_{j=2}^{n-1}b(n,1,j)u^{j-1}x^n
+\frac{uvx}{1-u}(B(x,1,v)-B(x,1,uv))\nonumber\\
&\quad+\sum_{n\geq2}\sum_{i=2}^nb(n,i,1)v^{i-1}x^n.
\end{align}
Applying \eqref{bneq1} yields
\begin{align*}
&\sum_{n\geq2}\sum_{i=2}^nb(n,i,1)v^{i-1}x^n\\
&=\sum_{n\geq3}\sum_{i=2}^{n-1}\left(b(n-1,i-1)+\sum_{d=2}^{i-1}c(n-i+d,d)\right)v^{i-1}x^n
+\frac{vx^2}{1-vx}\\
&=x\sum_{n\geq2}\sum_{i=1}^{n-1}b(n,i)v^ix^n
+\sum_{n\geq4}\sum_{i=3}^{n-1}\sum_{d=2}^{i-1}c(n-i+d,d)v^{i-1}x^n
+\frac{vx^2}{1-vx}\\
&=vxB(x,1,v)+\sum_{d\geq2}\sum_{i\geq1}\sum_{n\geq d+1}c(n,d)v^{i+d-1}x^{n+i}+\frac{vx^2}{1-vx}\\
&=vxB(x,1,v)+\frac{v^2x}{1-vx}C(x,1,v)+\frac{vx^2}{1-vx}.
\end{align*}

Hence, by \eqref{eqbba2}, we get
\begin{align}\label{eqbba3}
B(x,v,u)&=\sum_{n\geq3}\sum_{j=2}^{n-1}b(n,1,j)u^{j-1}x^n
+\frac{vx}{1-u}(B(x,1,v)-uB(x,1,uv))\nonumber\\
&\quad+\frac{v^2x}{1-vx}C(x,1,v)+\frac{vx^2}{1-vx}.
\end{align}
By \eqref{bneq3}, we obtain
\begin{align}
&\sum_{n\geq3}\sum_{j=2}^{n-1}b(n,1,j)u^{j-1}x^n\notag\\
&=\sum_{n\geq3}\sum_{j=2}^{n-1}2^{j-2}u^{j-1}x^n+
\sum_{n\geq4}\sum_{j=2}^{n-2}\sum_{k=j+1}^{n-1}\sum_{d=2}^j\binom{j-2}{d-2}b(n-d,k-d)u^{j-1}x^n\notag\\
&\quad+\sum_{n\geq4}\sum_{j=2}^{n-2}\sum_{k=j+1}^{n-1}\sum_{d=2}^j\sum_{\ell=d}^{k-2}\binom{j-2}{d-2}c(n-\ell,k-\ell)u^{j-1}x^n\notag\\
&=\frac{ux^3}{(1-x)(1-2ux)}+
\sum_{d\geq2}\sum_{j\geq d}\sum_{n\geq j+2}\sum_{k=j+1}^{n-1}\binom{j-2}{d-2}b(n-d,k-d)u^{j-1}x^n\notag\\
&\quad+\sum_{d\geq2}\sum_{j\geq d}\sum_{k\geq j+1}\sum_{n \geq k+1}\sum_{\ell=d}^{k-2}\binom{j-2}{d-2}c(n-\ell,k-\ell)u^{j-1}x^n.\label{eqbba3a}
\end{align}

We simplify the two multi-sum expressions appearing in \eqref{eqbba3a} as follows.  Replacing $j$, $n$ and $k$ with $j+d$, $n+d$ and $k+d$, respectively, in the first yields
\begin{align*}
&\sum_{d\geq2}\sum_{j\geq d}\sum_{n\geq j+2}\sum_{k=j+1}^{n-1}\binom{j-2}{d-2}b(n-d,k-d)u^{j-1}x^n\\
&=\sum_{d\geq2}\sum_{j\geq 0}\sum_{n\geq j+2}\sum_{k=j+1}^{n-1}\binom{j+d-2}{d-2}b(n,k)u^{j+d-1}x^{n+d}\\
&=\sum_{j\geq 0}\sum_{n\geq j+2}\sum_{k=j+1}^{n-1}b(n,k)\frac{u^{j+1}x^{n+2}}{(1-ux)^{j+1}}=\frac{ux^2}{1-u-ux}\sum_{n\geq 2}\sum_{k=1}^{n-1}b(n,k)x^n\left(1-\left(\frac{u}{1-ux}\right)^k\right)\\
&=\frac{ux^2}{1-u-ux}\left(B(x,1,1)-\frac{u}{1-ux}B(x,1,\frac{u}{1-ux})\right).
\end{align*}

For the second multi-sum expression in \eqref{eqbba3a}, first note that we may define $c(n,k)$ to be zero for all $n$ and $k$ where it is not the case that $n>k\geq 2$.  Then the innermost sum where $d \leq \ell \leq k-2$ may be replaced with $\ell \geq d$, which allows for this sum to be readily interchanged with the others.  This results in
\begin{align*}
&\sum_{d\geq2}\sum_{\ell\geq d}\sum_{j \geq d}\sum_{k\geq j+1}\sum_{n \geq k+1}\binom{j-2}{d-2}c(n-\ell,k-\ell)u^{j-1}x^n\\
&=\sum_{d\geq2}\sum_{\ell\geq 0}\sum_{j \geq 0}\sum_{k\geq j+1}\sum_{n \geq k+1}\binom{j+d-2}{d-2}c(n-\ell,k-\ell)u^{j+d-1}x^{n+d}\\
&=\sum_{d\geq2}\sum_{\ell\geq 0}\sum_{k\geq \ell+2}\sum_{j=0}^{k-1}\sum_{n \geq k+1}\binom{j+d-2}{d-2}c(n-\ell,k-\ell)u^{j+d-1}x^{n+d},
\end{align*}
where we have interchanged the third and fourth sums and replaced $k \geq 1$ with $k \geq \ell +2$ since $c(n-\ell,k-\ell)$ may assume non-zero values only when $k \geq \ell+2$.

Replacing $k$ and $n$ with $k+\ell$ and $n+\ell$ in the last expression then gives
\begin{align*}
&\sum_{d\geq2}\sum_{\ell\geq 0}\sum_{k\geq 2}\sum_{j=0}^{k+\ell-1}\sum_{n \geq k+1}\binom{j+d-2}{d-2}c(n,k)u^{j+d-1}x^{n+d+\ell}\\
&=\sum_{\ell\geq 0}\sum_{k\geq 2}\sum_{j=0}^{k+\ell-1}\sum_{n \geq k+1}c(n,k)\frac{u^{j+1}x^{n+\ell+2}}{(1-ux)^{j+1}}\\
&=\frac{ux^2}{1-u-ux}\sum_{\ell\geq 0}\sum_{k\geq 2}\sum_{n \geq k+1}c(n,k)x^{n+\ell}\left(1-\left(\frac{u}{1-ux}\right)^{k+\ell}\right)\\
&=\frac{ux^2}{1-u-ux}\sum_{n\geq 3}\sum_{k=2}^{n-1}c(n,k)\left(\frac{x^n}{1-x}-\left(\frac{u}{1-ux}\right)^k\cdot\frac{x^n}{1-\frac{ux}{1-ux}}\right)\\
&=\frac{ux^2}{1-u-ux}\left(\frac{1}{1-x}C(x,1,1)
-\frac{u^2}{(1-ux)(1-2ux)}C(x,1,\frac{u}{1-ux})\right).
\end{align*}

Combining the prior results, we have by \eqref{eqbba3a},
\begin{align*}
&\sum_{n\geq3}\sum_{j=2}^{n-1}b(n,1,j)u^{j-1}x^n\\
&=\frac{ux^3}{(1-x)(1-2ux)}+\frac{ux^2}{1-u-ux}\left(B(x,1,1)-\frac{u}{1-ux}B(x,1,\frac{u}{1-ux})\right)\\
&\quad+\frac{ux^2}{1-u-ux}\left(\frac{1}{1-x}C(x,1,1)
-\frac{u^2}{(1-ux)(1-2ux)}C(x,1,\frac{u}{1-ux})\right).
\end{align*}
Hence, by \eqref{eqbba3}, we have that $B(x,v,u)$ satisfies
\begin{align}\label{eqbba4}
B(x,v,u)&=\frac{ux^3}{(1-x)(1-2ux)}+\frac{ux^2}{1-u-ux}\left(B(x,1,1)-\frac{u}{1-ux}B(x,1,\frac{u}{1-ux})\right)\nonumber\\
&\quad+\frac{ux^2}{1-u-ux}\left(\frac{1}{1-x}C(x,1,1)
-\frac{u^2}{(1-ux)(1-2ux)}C(x,1,\frac{u}{1-ux})\right)\nonumber\\
&\quad+\frac{vx}{1-u}(B(x,1,v)-uB(x,1,uv))+\frac{v^2x}{1-vx}C(x,1,v)
+\frac{vx^2}{1-vx}.
\end{align}
By substituting $v=1$ into \eqref{eqbba4}, we obtain
\begin{align*}
B(x,1,u)&=\frac{(u^2x+u-1)x}{(ux-u+1)(ux+u-1)}B(x,1,1)
-\frac{(1-u)^2x}{(ux-u+1)(ux+u-1)(1-x)}C(x,1,1)\\
&\quad+\frac{(1-u)u^3x^2}{(ux-u+1)(ux+u-1)(ux-1)(2ux-1)}C(x,1,\frac{u}{1-ux})\\
&\quad+\frac{(1-u)(1-ux)x^2}{(ux-u+1)(1-x)(1-2ux)}\\
&\quad-\frac{(1-u)u^2x^2}{(ux-u+1)(ux+u-1)(ux-1)}B(x,1,\frac{u}{1-ux}).
\end{align*}

Note that it is not possible to find $B(x,1,1)$ directly from the prior equation by taking $u=1$.  However, we may iterate the preceding equation to first obtain the following explicit formula for $B(x,1,u)$.

\begin{lemma}\label{lem3}
The generating function $B(x,1,u)$ is given by
\begin{align*}
B(x,1,u)&=B(x,1,1)\sum_{j\geq0}\frac{(-1)^j((1-jux)^2+(j-1)u^2x-u)u^{2j}x^{2j+1}}
{(1-u+ux)\prod_{i=1}^j(1-iux)\prod_{i=1}^{j+1}(1-u-iux)} \\ &\quad+C(x,1,1)\sum_{j\geq0}\frac{(-1)^j(1-u-jux)^2u^{2j}x^{2j+1}}{(1-x)(1-u+ux)
\prod_{i=1}^j(1-iux)\prod_{i=1}^{j+1}(1-u-iux)} \\ &\quad-\sum_{j\geq0}\frac{(-1)^j(1-u-jux)u^{2j+3}x^{2j+2}}{(1-u+ux)
\prod_{i=1}^{j+2}(1-iux)\prod_{i=1}^{j+1}(1-u-iux)}C\left(x,1,\frac{u}{1-(j+1)ux}\right)\\
&\quad+\sum_{j\geq0}\frac{(-1)^j(1-(j+1)ux)^2(1-u-jux)u^{2j}x^{2j+2}}{(1-x)
(1-u+ux)\prod_{i=1}^{j+2}(1-iux)\prod_{i=1}^{j}(1-u-iux)}.
\end{align*}
\end{lemma}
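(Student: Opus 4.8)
The plan is to solve the functional equation for $B(x,1,u)$ obtained by setting $v=1$ in \eqref{eqbba4} (the display immediately preceding the lemma) by \emph{iteration}, exactly as was done for $V(x,p)$ in passing from \eqref{eqFV1} to Lemma \ref{lem1}. First I would write that equation in the compact form
\[
B(x,1,u)=a(u)+b(u)\,B\!\left(x,1,\tfrac{u}{1-ux}\right),
\]
where
\[
b(u)=-\frac{(1-u)u^2x^2}{(ux-u+1)(ux+u-1)(ux-1)}=-\frac{(1-u)u^2x^2}{(1-u+ux)(1-u-ux)(1-ux)},
\]
the two sign changes from $ux+u-1=-(1-u-ux)$ and $ux-1=-(1-ux)$ cancelling, and $a(u)$ collects the remaining four terms: the coefficient of $B(x,1,1)$, the coefficient of $C(x,1,1)$, the coefficient of $C(x,1,\frac{u}{1-ux})$, and the pure-constant term.

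The engine of the iteration is the fractional-linear recursion $u_0=u$, $u_{j+1}=\frac{u_j}{1-u_jx}$, which a one-line induction shows has closed form $u_j=\frac{u}{1-jux}$. Iterating the compact equation then yields
\[
B(x,1,u)=\sum_{j\ge0}a(u_j)\prod_{i=0}^{j-1}b(u_i),
\]
and I would justify this as an identity of formal power series rather than appeal to analytic smallness: since each factor $b(u_i)$ carries $x^2$ and each $a(u_j)$ is $O(x^2)$ (the power series $B(x,1,1)$, the relevant values of $C(x,1,\cdot)$, and the constant term all vanish to order at least $x^2$, and substituting the power series $u_j=u+O(x)$ does not lower this order), the $j$-th summand is $O(x^{2j+2})$, so the coefficient of any fixed power of $x$ is a finite sum.

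The computational heart is evaluating the product. Setting $f_i=1-u-iux$ and $g_i=1-iux$, I would substitute $u_i=\frac{u}{1-iux}$ into $b$ and simplify, whereupon the spurious factor $(1-iux)^3$ cancels and
\[
b(u_i)=-\frac{f_i\,u^2x^2}{f_{i-1}\,f_{i+1}\,g_{i+1}}.
\]
The $f$'s then telescope, $\prod_{i=0}^{j-1}\frac{f_i}{f_{i-1}f_{i+1}}=\frac{f_{j-1}}{f_{-1}\prod_{i=1}^{j}f_i}$ with $f_{-1}=1-u+ux$, while $\prod_{i=0}^{j-1}g_{i+1}^{-1}=\prod_{i=1}^{j}g_i^{-1}$, giving a clean closed form for $\prod_{i=0}^{j-1}b(u_i)$.

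Finally I would substitute $u_j=\frac{u}{1-jux}$ into each of the four coefficients making up $a(u_j)$, using the identities $u_j^2x+u_j-1=-\frac{(1-jux)^2+(j-1)u^2x-u}{(1-jux)^2}$, $u_jx-u_j+1=\frac{f_{j-1}}{1-jux}$, $u_jx+u_j-1=-\frac{f_{j+1}}{1-jux}$, $u_jx-1=-\frac{g_{j+1}}{1-jux}$ and $2u_jx-1=-\frac{g_{j+2}}{1-jux}$, then multiply by the telescoped product and observe that the stray $f_{j-1}$ cancels and the leftover $f_{j+1}$ (resp.\ $g_{j+1},g_{j+2}$) merges with $\prod_{i=1}^{j}f_i$ (resp.\ $\prod_{i=1}^{j}g_i$) to consolidate the products into the ranges $\prod_{i=1}^{j+1}(1-u-iux)$ and $\prod_{i=1}^{j+2}(1-iux)$ appearing in the statement, the $C$-argument shifting to $C(x,1,\frac{u}{1-(j+1)ux})$ since $\frac{u_j}{1-u_jx}=u_{j+1}$. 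Term by term this reproduces the four sums in the lemma. The main obstacle is purely the bookkeeping: keeping the three product ranges aligned across the four summands and checking that each numerator collapses to the stated quadratic in $u$ and $x$. There is no conceptual difficulty once the telescoping is set up, as the verification of the first summand (whose numerator is exactly $(1-jux)^2+(j-1)u^2x-u$) already exhibits the full pattern.
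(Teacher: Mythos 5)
Your proposal is correct and follows essentially the same route as the paper: the paper's proof of Lemma \ref{lem3} consists precisely of iterating the $v=1$ specialization of \eqref{eqbba4} under $u\mapsto u/(1-ux)$, exactly as was done for $V(x,p)$, and your telescoping of $\prod_{i=0}^{j-1}b(u_i)$ and the substitution identities for $u_j=\frac{u}{1-jux}$ check out against all four summands of the stated formula. Your formal-power-series justification of the infinite iteration (each summand being $O(x^{2j+2})$) is if anything cleaner than the paper's appeal to $|x|$ being sufficiently small.
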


Now one may take $u=1$ in Lemma \ref{lem3}, and solve, to obtain $B(x,1,1)$ explicitly.

\begin{lemma}\label{lem4}
The generating function $B(x,1,1)$ is given by
\small $$-\frac{\frac{C(x,1,1)}{1-x}\sum_{j\geq1}\frac{j^2x^{j+1}}{(j+1)!\prod_{i=1}^j(1-ix)}
+\sum_{j\geq1}\frac{jx^{j+1}}{(j+1)!\prod_{i=1}^{j+2}(1-ix)}C\left(x,1,\frac{1}{1-(j+1)x}\right) +\frac{1}{1-x}\sum_{j\geq1}\frac{(1-(j+1)x)^2x^{j+2}}{(j-1)!\prod_{i=1}^{j+2}(1-ix)}}
{\sum_{j\geq1}\frac{(j^2x-j-1)x^j}{(j+1)!\prod_{i=1}^j(1-ix)}}.$$ \normalsize
\end{lemma}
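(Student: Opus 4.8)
The plan is to obtain $B(x,1,1)$ by substituting $u=1$ into the explicit formula for $B(x,1,u)$ furnished by Lemma \ref{lem3}. Although the equation immediately preceding Lemma \ref{lem3} cannot be specialized at $u=1$ directly---the term $B(x,1,\frac{u}{1-ux})$ degenerates there into the shifted unknown $B(x,1,\frac{1}{1-x})$, so no closed relation for $B(x,1,1)$ results---the fully iterated identity of Lemma \ref{lem3} contains no shifted values of $B$ on its right-hand side. Consequently, once $u=1$ is inserted, the only occurrence of an unknown $B$ is the factor $B(x,1,1)$ multiplying the first sum, and the resulting relation is linear in the single quantity $B(x,1,1)$.

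First I would evaluate the $u=1$ limits of the four sums appearing in Lemma \ref{lem3}. The recurring ingredients are the prefactor $(1-u+ux)\big|_{u=1}=x$ and the products $\prod_{i=1}^{j+1}(1-u-iux)\big|_{u=1}=(-1)^{j+1}(j+1)!\,x^{j+1}$ and $\prod_{i=1}^{j}(1-u-iux)\big|_{u=1}=(-1)^{j}j!\,x^{j}$, together with the elementary reductions $(1-u-jux)\big|_{u=1}=-jx$ and $((1-jux)^2+(j-1)u^2x-u)\big|_{u=1}=x(j^2x-j-1)$ of the various numerators. Substituting these and cancelling the powers of $x$ converts each of the four $j$-sums into a quotient of the form $\frac{(\cdots)x^{\bullet}}{(j+1)!\prod_{i=1}^{\bullet}(1-ix)}$ multiplied, respectively, by $B(x,1,1)$, $C(x,1,1)$, a shifted value $C(x,1,\frac{1}{1-(j+1)x})$, or a constant. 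The shifted $C$-values are themselves known explicitly through \eqref{eqccx1u} and Lemma \ref{lem1}, so they present no obstruction and simply populate the numerator.

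The crux is the bookkeeping of the $j=0$ terms. In the second, third, and fourth sums the surviving numerators each carry a factor of $j$ (or $j^{2}$), so their $j=0$ terms vanish and these sums effectively begin at $j=1$, matching the ranges in the statement. By contrast, the first sum has $j=0$ term equal to $-\frac{(j^{2}x-j-1)x^{j}}{(j+1)!\prod_{i=1}^{j}(1-ix)}\big|_{j=0}=1$, so its coefficient of $B(x,1,1)$ is $1-D$, where $D=\sum_{j\geq1}\frac{(j^{2}x-j-1)x^{j}}{(j+1)!\prod_{i=1}^{j}(1-ix)}$ is precisely the denominator in the statement. The specialized identity thus reads $B(x,1,1)=(1-D)\,B(x,1,1)+R$, where $R$ collects the three remaining sums; transposing the $B(x,1,1)$ terms gives $D\cdot B(x,1,1)=R$, and a final check of signs identifies $R$ with $-N$, where $N$ is the displayed numerator. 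Dividing by $D$ then yields the asserted formula.

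I expect the main difficulty to be one of careful algebraic accounting rather than of concept: tracking the interlocking signs $(-1)^{j}$, the factorial reductions of the telescoping products at $u=1$, and above all the isolation of the $j=0$ contribution of the first sum, which supplies the entire left-hand coefficient and hence the denominator $D$. One must also confirm that no denominator in Lemma \ref{lem3} vanishes at $u=1$ for $x$ near $0$---indeed each factor $1-u-iux$ with $i\geq1$ becomes $-ix\neq0$ and $1-u+ux$ becomes $x\neq0$---so that the specialization is legitimate and the series continue to converge for $|x|$ sufficiently small.
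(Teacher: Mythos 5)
Your proposal is correct and follows exactly the paper's (very tersely stated) route: specialize the iterated formula of Lemma \ref{lem3} at $u=1$, using $(1-u-iux)\big|_{u=1}=-ix$ to collapse the products into factorials, observe that only the $j=0$ term of the first sum survives as a nonzero constant (equal to $1$) so the relation becomes $B(x,1,1)=(1-D)B(x,1,1)+R$, and solve the resulting linear equation. Your bookkeeping of the signs, the factorial reductions, and the vanishing $j=0$ terms in the remaining three sums all check out against the stated formula.
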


\subsection{Explicit formula for $A(x)$}

By \eqref{andef}, the desired gf formula $A(x)=x+\sum_{n\geq 2}a_{n-1}x^n$ can be stated explicitly as follows.

\begin{theorem}\label{mth1}
The generating function for the number of circular permutations of length $n$ for $n \geq 1$ that avoid the vincular pattern $\overline{23}41$ is given by
$$A(x)=\frac{x}{1-x}+xB(x,1,1)+\frac{x}{1-x}C(x,1,1),$$
where $B(x,1,1)$ is given in Lemma \ref{lem4} and $C(x,1,1)$ by \eqref{eqccx11}.
\end{theorem}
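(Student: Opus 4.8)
The plan is to derive the stated formula directly from the refinement identity \eqref{andef}, translating it term by term into generating functions. No new combinatorial argument is needed beyond what has already been established for the arrays $b(n,i,j)$ and $c(n,i,j)$; the entire content is careful index bookkeeping, so I would present it as a short computation rather than a structural proof.

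First I would record the specializations that link the bivariate array gf's to the column sums appearing in \eqref{andef}. Setting $v=u=1$ in the definitions gives $B_n(1,1)=\sum_{i,j}b(n,i,j)=\sum_{j=1}^nb(n,j)$ and $C_n(1,1)=\sum_{i,j}c(n,i,j)=\sum_{j=1}^nc(n,j)$, so that the inner sum $\sum_{i=1}^{n-d}c(n-d,i)$ in \eqref{andef} is exactly $C_{n-d}(1,1)$, while $\sum_{j=1}^nb(n,j)=B_n(1,1)$. Consequently $B(x,1,1)=\sum_{n\geq2}\left(\sum_jb(n,j)\right)x^n$ and $C(x,1,1)=\sum_{n\geq2}C_n(1,1)x^n$ carry precisely the two array contributions to \eqref{andef}.

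Next I would handle the iterated sum. Reindexing via $m=n-d$ turns $\sum_{d=0}^{n-2}\sum_ic(n-d,i)$ into the partial sum $\sum_{m=2}^nC_m(1,1)$, and the standard fact that multiplication by $1/(1-x)$ performs cumulative summation yields $\sum_{n\geq2}\left(\sum_{m=2}^nC_m(1,1)\right)x^n=\frac{1}{1-x}C(x,1,1)$. Likewise the constant term $1$ in \eqref{andef}, summed over $n\geq2$, contributes $\sum_{n\geq2}x^n=\frac{x^2}{1-x}$. Combining these with the initial value $a_1=1$ and simplifying $x+\frac{x^2}{1-x}=\frac{x}{1-x}$ gives $\sum_{n\geq1}a_nx^n=\frac{x}{1-x}+B(x,1,1)+\frac{1}{1-x}C(x,1,1)$.

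Finally, I would pass from this auxiliary series to $A(x)$. Since $|\mathcal{A}_n|=a_{n-1}$, we have $A(x)=x+\sum_{n\geq2}a_{n-1}x^n=x+x\left(\sum_{m\geq1}a_mx^m\right)$; substituting the expression just obtained and once more using $x+\frac{x^2}{1-x}=\frac{x}{1-x}$ produces $A(x)=\frac{x}{1-x}+xB(x,1,1)+\frac{x}{1-x}C(x,1,1)$, as claimed, with $B(x,1,1)$ and $C(x,1,1)$ supplied by Lemma \ref{lem4} and \eqref{eqccx11}. The only place to exercise care — and the closest thing to an obstacle — is keeping the index shift between $a_n$ and $a_{n-1}$ (the extra factor of $x$) consistent with the two separate applications of the cumulative-sum operator $1/(1-x)$, so that the constant term and the $a_1=1$ boundary value are accounted for exactly once and not double counted.
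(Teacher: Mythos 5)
Your computation is correct and is exactly the argument the paper intends: Theorem \ref{mth1} is stated as an immediate consequence of \eqref{andef}, and your term-by-term translation (identifying $\sum_j b(n,j)$ with $B_n(1,1)$, $\sum_i c(n-d,i)$ with $C_{n-d}(1,1)$, using $\frac{1}{1-x}$ for the cumulative sum, and shifting by $x$ to pass from $a_{n-1}$ to $|\mathcal{A}_n|$) is the spelled-out version of that one-line justification. No gaps; the index bookkeeping, including the two uses of $x+\frac{x^2}{1-x}=\frac{x}{1-x}$, checks out.
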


For instance, from the formula for $A(x)$, we have that $a_n=|\mathcal{A}_{n+1}|$ for $1 \leq n \leq 30$ is given by
\begin{center}
\begin{tabular}{ll||ll||ll}
$n$&$a_n$&$n$&$a_n$&$n$&$a_n$\\\hline\hline
1&1&2&2&3&5\\
4&15&5 &50&6&180\\
7&690&8&2792&9&11857\\
10&52633&11&243455&12&1170525\\
13&5837934&14&30151474&15&161021581\\
16&888001485&17&5051014786&18&29600662480\\
19&178541105770&20&1107321666920&21&7055339825171\\
22&46142654894331&23&309513540865544&24&2127744119042216\\ 25&14979904453920111&26&107932371558460341&27&795363217306369817\\ 28&5990768203554158167&29&46094392105916344968&30&362092868720288824992.
\end{tabular}
\end{center}

Based on the terms of the sequence $a_n$ for $1 \leq n \leq 130$, we conjecture that there exists no constant $c$ such that $a_n<c^n$ for all $n\geq1$.  Further, we conjecture that the inequality $a_n^{n+1}<a_{n+1}^n$ holds for all $n \geq 1$.

We conclude by noting that a more general gf result in terms of two parameters defined on $\mathcal{A}_n$ can be obtained from the preceding as follows.  First note that the penultimate and final letter statistics on $\mathcal{L}_{n-1}$ correspond to (one less than) the values of the two letters directly prior to $1$ (when proceeding clockwise) within a member of $\mathcal{A}_n$ and are thus marked by $v$ and $u$.  Let $i+2$ and $j+2$ for some $i,j \geq 0$ denote the two letters directly preceding $1$ within a member of $\mathcal{A}_n$.  Let  $A(x,v,u)$ be the gf enumerating the members of $\mathcal{A}_n$ according to the values of $i$ and $j$ (marked by $v$ and $u$, respectively).  Then it is seen that $A(x,v,u)$ is given by
\begin{equation}\label{gener}
A(x,v,u)=\frac{x+(1-u)x^2}{1-ux}+xB(x,v,u)+\frac{uvx}{1-uvx}C(x,v,u),
\end{equation}
where $C(x,v,u)$ is as in Lemma \ref{lem2} and $B(x,v,u)$ is given by \eqref{eqbba4}, together with Lemmas \ref{lem2}--\ref{lem4}.


\begin{thebibliography}{99}

\bibitem{BS}
E.~Babson and E.~Steingr\`{\i}msson, Generalized permutation patterns and a classification
of Mahonian statistics, {\it S\'{e}m. Lothar. Combin.} {\bf 44}
(2000), Art. B44b.

\bibitem{Cal}
D.~Callan, Pattern avoidance in circular permutations, arXiv:math/0210014 [math.CO], pre-print (2002).

\bibitem{DLM}
R.~Domagalski, J.~Liang, Q.~Minnich, B.~E.~Sagan, J.~Schmidt and A.~Sietsema, Cyclic pattern containment and avoidance, arXiv:2106.02534 [math.CO], pre-print (2021).

\bibitem{ES}
S.~Elizalde and B.~Sagan, Consecutive patterns in circular permutations, arXiv:2107.04717 [math.CO], pre-print (2021).

\bibitem{GLW1}
D.~Gray, C.~Lanning and H.~Wang, Pattern containment in circular permutations, {\em Integers} {\bf18}B (2018),
\#A4.

\bibitem{GLW2}
D.~Gray, C.~Lanning and H.~Wang, Patterns in colored circular permutations,
{\em Involve} {\bf 12}(1) (2019), 157--169.


\bibitem{HeuM}
S.~Heubach and T.~Mansour, \emph{Combinatorics of Compositions and Words}, CRC Press, Boca Raton, FL, 2009.

\bibitem{HouM}
Q.~Hou and T.~Mansour, Kernel method and linear recurrence system, {\em J. Comput. Appl. Math.} {\bf216}(1) (2008), 227--242.

\bibitem{KW}
D.~Kim and L.~Williams, Schubert polynomials and the inhomogeneous TASEP on a ring, arXiv:2102.00560 [math.CO], pre-print (2021).

\bibitem{Kit}
S.~Kitaev, {\em Patterns in Permutations and Words}, Springer-Verlag, Berlin, 2011.

\bibitem{Knuth}
D.~E.~Knuth, \emph{The Art of Computer Programming, Vol's. 1 and 3}, Addison-Wesley, Reading, MA (1968, 1973).

\bibitem{Li}
R.~Li, Vincular pattern avoidance on cyclic permutations, arXiv:2107.12353 [math.CO], pre-print (2021).

\bibitem{Mans}
T. Mansour, {\em Combinatorics of Set Partitions}, CRC Press, Boca Raton, FL, 2012.


\bibitem{SS} R.~Simion and F.~W.~Schmidt,  Restricted permutations, {\em European J. Combin.} \textbf{6} (1985), 383--406.

    \bibitem{Slo}
N.~J.~Sloane, The On-Line Encyclopedia of Integer Sequences, http://oeis.org, 2010.

\bibitem{Ve}
A.~Vella, Pattern avoidance in permutations: linear and cyclic orders, {\em Electron. J.
Combin.} \textbf{9} (2002-3), \#R18.





\end{thebibliography}
\end{document}